\documentclass[11pt,twoside]{amsart}
\usepackage{amsmath, amsthm, amsfonts, amssymb}
\usepackage[mathcal,mathscr]{eucal}
\usepackage{mathrsfs}
\usepackage{graphicx, xcolor}
\usepackage[colorlinks]{hyperref}

\parskip          =2mm
\oddsidemargin    =0cm \evensidemargin   =0cm \textwidth =16cm
\textheight       =24cm  \headheight       =0pt \topmargin = 0cm
\footskip         =0cm \topskip          =0cm
\parskip           =0.2cm
\binoppenalty10000

\DeclareMathOperator{\PSL}{PSL}
\DeclareMathOperator{\SL}{SL}
\DeclareMathOperator{\Sz}{Sz}


\newtheorem{theorem}{Theorem}[section]
\newtheorem{lemma}[theorem]{Lemma}
\newtheorem{proposition}[theorem]{Proposition}
\newtheorem{corollary}[theorem]{Corollary}

\newtheorem{example}[theorem]{Example}
\newtheorem{question}[theorem]{Question}

\newtheorem{remark}[theorem]{Remark}

\begin{document}

\title{Some results on the solubility graph of a finite group}

\author[Mina Poozesh and Yousef Zamani]{Mina Poozesh and Yousef Zamani$^{\ast }$}

 \subjclass[2010]{Primary: 05C25; Secondary: 20D05, 20D99, 20P05}

\thanks{$^{\ast}$ Corresponding author}

\keywords{Finite insoluble group, solubility graph, solubility degree, solubilizer}
\maketitle

\begin{abstract}
Let $G$ be a finite insoluble group with soluble radical $  R(G)$. The solubility graph $\Gamma_{\rm S}(G)$ of $G$ is a simple graph whose vertices are the elements of $G\setminus  R(G) $
and two distinct vertices $x$ and $y$ are adjacent if and only if they generate a soluble subgroup of $G$. In this paper, we investigate the several properties of the solubility graph $\Gamma_{\rm S}(G)$.
\end{abstract}
\pagestyle{myheadings}
\markboth{\rightline {\scriptsize   M. Poozesh and Y. Zamani}}
         {\leftline{\scriptsize The solubility graph of a finite group}}

\bigskip
\bigskip

\section{\bf Introduction}
Let $G$ be a finite insoluble group with soluble radical $R(G)$. 
The solubility graph of $G$ is a simple graph whose vertices are the elements of $G\setminus  R(G) $
 and two vertices $u$ and $v$ are adjacent if the subgroup $\langle u, v\rangle$ is soluble. We denote this graph with $\Gamma_{\rm S}(G)$. For any $v\in G\setminus R(G)$, we denote the degree of $v$ in 
$\Gamma_{\rm S}(G)$ by $\deg(v)$. Notice that $\deg(v)=|\mathsf{Sol}_G(v)|-|R(G)|-1$. Here, $\mathsf{Sol}_G(v)$ is the solubilizer of $v$ in $G$, which consists of the elements $g$ in $G$ such that the subgroup generated by $v$ and $g$, i.e., $\langle v, g\rangle$, is soluble. For further exploration of the arithmetic and structural properties of the solubilizer of an element in a finite group $G$, we refer the reader to the references \cite{Akbari1, Akbari, Hai, Poozesh}.
Denote by $\deg (\Gamma_{\rm S}(G))$ the vertex degree set of $\Gamma_{\rm S}(G)$. According to the results presented in \cite{BHOWAL}, the solubility graph $\Gamma_{\rm S}(G)$ exhibits certain properties. It is proven that $\Gamma_{\rm S}(G)$ is not a star graph, a tree, an $n$-partite graph for any positive integer $n\geq 2$, or a regular graph. Additionally, the girth of $\Gamma_{\rm S}(G)$, which is the length of the shortest cycle in the graph, is determined to be $3$. Furthermore, the clique number of $\Gamma_{\rm S}(G)$, which represents the size of the largest complete subgraph within the graph, is proven to be at least $4$.

In a separate study conducted by Burness et al. \cite{Burness}, it is established that the solubility graph $\Gamma_{\rm S}(G)$ is a connected graph, meaning that there exists a path between any pair of vertices in the graph. Additionally, it is shown that the diameter of $\Gamma_{\rm S}(G)$, which is the maximum distance between any pair of vertices in the graph, is at most $5$.

It is indeed noteworthy to mention that the solubility graph $\Gamma_{\rm S}(G)$ can be viewed as the complement of the non-solvable graph of the group $G$ as discussed in \cite{Hai, BHOWAL2}. The non-solvable graph focuses on the relationships among the elements of $G$ that do not generate soluble subgroups. Therefore, by taking the complement, we obtain the solubility graph, which emphasizes the soluble relationships between the elements of $G$.

Moreover, the solubility graph can be seen as an extension of the commuting and nilpotent graphs of finite groups that have been extensively studied in various research works such as \cite{s.Akbari, Bates,Cameron,  Das1}. These related graphs explore specific aspects of group properties, such as commuting elements or nilpotency relationships. The solubility graph broadens the scope by focusing on the soluble subgroups generated by pairs of elements.


In this paper, the main objective is to investigate several properties of the solubility graph $\Gamma_{\rm S}(G)$.

\section{\bf Some results on vertices degree}
Let $G$ be a finite insoluble group with the soluble radical $R(G)$. We denote by $ \delta_{s} (G)$ and $\Delta_{s} (G)$, the minimum and maximum degrees of vertices in the solubility graph $\Gamma_{\rm S}(G)$,  respectively.\\
We first prove the following lemma.
\begin{lemma}\label{G-Sol}
Let $G$ be a finite insoluble group. Then for every $x\in G\setminus R(G)$, we have
\begin{itemize}
\item[(a)]
$|G|-|\mathsf{Sol}_G(x)|\geq |x|+\varphi(|x|)$,
\item[(b)]
	$|G|-|\mathsf{Sol}_G(x)|\geq 6,$
\end{itemize}
where $\varphi$ is the Euler $\varphi$-function.
\end{lemma}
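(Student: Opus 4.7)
Since $x\notin R(G)$, we may choose $y\in G$ with $\langle x,y\rangle$ insoluble. The observation driving everything is that $y$ cannot normalize $\langle x\rangle$: if it did, then $\langle x\rangle$ would be a cyclic normal subgroup of $\langle x,y\rangle$ with cyclic quotient (generated by the image of $y$), so $\langle x,y\rangle$ would be metacyclic and therefore soluble, contradicting the choice of $y$. Consequently $H:=\langle x\rangle\cap y\langle x\rangle y^{-1}$ is a proper subgroup of $\langle x\rangle$.

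For part (a), the plan is to combine the right coset $y\langle x\rangle$, of size $|x|$, with the set $T:=\{x^{k}y:\gcd(k,|x|)=1\}$, of size $\varphi(|x|)$. For any element $z$ of either set one has $\langle x,z\rangle=\langle x,y\rangle$, so both sets are contained in $G\setminus\mathsf{Sol}_G(x)$. The key point will be their disjointness: an equality $yx^{i}=x^{k}y$ would force $x^{k}=yx^{i}y^{-1}\in H$, but when $\gcd(k,|x|)=1$ the element $x^{k}$ generates $\langle x\rangle$ and cannot lie in a proper subgroup of it. Hence the union provides $|x|+\varphi(|x|)$ distinct elements of $G\setminus\mathsf{Sol}_G(x)$, proving (a).

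For part (b), I will use the full double coset $\langle x\rangle y\langle x\rangle$, whose size is $|x|^{2}/|H|\ge 2|x|$ (as $|H|$ is a proper divisor of $|x|$, so $|H|\le|x|/2$). All its elements lie in $G\setminus\mathsf{Sol}_G(x)$, so $|G\setminus\mathsf{Sol}_G(x)|\ge 6$ is immediate when $|x|\ge 3$. The delicate case is $|x|=2$, where this yields only four elements; here the plan is to invoke the second double coset $\langle x\rangle y^{-1}\langle x\rangle$ (another four elements) and to verify that the two double cosets are disjoint, producing $8\ge 6$ elements in total. Disjointness amounts to $y^{-1}\notin\{y,xy,yx,xyx\}$: insolubility of $\langle x,y\rangle$ with $|x|=2$ forces $|y|\ge 3$, so $y\ne y^{-1}$; the cases $y^{-1}=xy$ and $y^{-1}=yx$ both give $y^{2}=x$, which would make $\langle x,y\rangle=\langle y\rangle$ cyclic; and $y^{-1}=xyx$ is the relation $(xy)^{2}=e$, which together with $x^{2}=y^{|y|}=e$ presents $\langle x,y\rangle$ as a quotient of the dihedral group $D_{|y|}$. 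Both outcomes contradict the insolubility of $\langle x,y\rangle$.

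The step I expect to be the main obstacle is precisely the $|x|=2$ case of (b): the single-double-coset bound falls short by two, and the essential trick is to exclude the dihedral configuration $(xy)^{2}=e$, which is what makes the double cosets of $y$ and $y^{-1}$ disjoint and supplies the missing elements.
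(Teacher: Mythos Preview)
Your proof is correct. Part~(a) is essentially the paper's argument: exhibit $y\langle x\rangle$ together with $\{x^{k}y:\gcd(k,|x|)=1\}$ and use that $y\notin N_G(\langle x\rangle)$ to obtain disjointness. You make explicit the metacyclic reason why $y$ cannot normalize $\langle x\rangle$, which the paper leaves unstated.

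For part~(b) your organization differs from the paper's. The paper splits into three cases: for $|x|\ge 4$ part~(a) already yields $|x|+\varphi(|x|)\ge 6$; for $|x|=3$ part~(a) gives only $5$, and the gap is closed by the divisibility observation that $|x|$ divides both $|G|$ and $|\mathsf{Sol}_G(x)|$; for $|x|=2$ six explicit elements $\{y,xy,yx,y^{-1},xy^{-1},y^{-1}x\}$ are produced. Your double-coset bound $|\langle x\rangle y\langle x\rangle|=|x|^{2}/|H|\ge 2|x|$ handles all $|x|\ge 3$ in one stroke and avoids the divisibility trick entirely, which is a tidier packaging. For $|x|=2$ you use two full double cosets (of $y$ and $y^{-1}$) to get eight elements rather than six; your exclusion of the dihedral relation $(xy)^{2}=e$ is exactly the paper's exclusion of $y^{x}=y^{-1}$ in different clothing. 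Both approaches ultimately rest on the same two ideas---translating $y$ by powers of $x$, and bringing in $y^{-1}$ when $|x|$ is small---but yours trades the arithmetic case analysis for a uniform double-coset count.
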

\begin{proof}
Let $|x|=l$ and $y\notin \mathsf{Sol}_G(x)$. Then for each $0\leq i \leq l-1$, since $\langle x,x^{i} y\rangle=\langle x, y\rangle$, so $x^{i}y\notin \mathsf{Sol}_G(x)$.
Now suppose that $(i,l)=1$. If for $0\leq j< l$, $x^{i}y=yx^{j}$, then $y^{-1}x^{i}y=x^{j}$ and hence $y\in N_{G}(\langle x^{i}\rangle)=N_{G}(\langle x\rangle)$, a contradiction. Assume that
	$$
Y=\{ x^{i}y~|~0\leq i< l, (i,l)=1\} \cup \{yx^{j}~|~0\leq j < l\}.
$$
Then $Y\subseteq G\setminus  \mathsf{Sol}_G(x)$ and Part $(a)$ holds.
To prove Part $(b)$, suppose that $l=2$, then $Y=\{y,xy,yx\}$. Since $\langle x,y\rangle$ is insoluble, so $|y|> 2$, hence $y^{-1}\neq y$ and $y^{-1}\notin \mathsf{Sol}_G(x)$. Therefore $xy^{-1}\neq xy$. If $xy^{-1}= yx$, $y^{x}=y^{-1}$ and $\langle x,y\rangle$ is soluble because $\langle y\rangle \triangleleft\langle x,y\rangle$, which is a contradiction. Similarly, $y^{-1}x\neq yx$ and $y^{-1}x\neq xy$. Therefore
 $$
\{y, xy, yx, y^{-1}, xy^{-1}, y^{-1}x\}\subseteq G\setminus  \mathsf{Sol}_G(x).
$$
For $l=3$, by Part $(a)$, $|G|-|\mathsf{Sol}_G(x)|\geq5$ and the equality is not possible because
$l$ divides $|\mathsf{Sol}_{G}(x)|$. Eventually by Part $(a)$, for $l\ge 4$, $|G|-|\mathsf{Sol}_G(x)|\geq 6$ and the proof is complete. 
\end{proof}
In the following we give lower and upper bounds for $\delta_{s}(G)$ and $\Delta_{s}(G)$, respectively.
\begin{proposition}\label{delta}
In the solubility graph $\Gamma_{\rm S}(G)$, if $R(G)=1$, then $ \delta_{s}(G) \geq 8$; otherwise $ \delta_{s}(G) \geq 17$. 
\end{proposition}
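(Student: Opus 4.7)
The plan is to reduce to the case $R(G)=1$ via the natural projection $\pi : G \to \bar G := G/R(G)$, writing $\bar v := vR(G)$, and then to establish the sharp uniform bound $|\mathsf{Sol}_{\bar G}(\bar v)| \geq 10$ in that reduced setting.

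For the reduction, I would prove the coset identity $\mathsf{Sol}_G(v) = \pi^{-1}(\mathsf{Sol}_{\bar G}(\bar v))$. The forward inclusion is automatic since solubility is preserved under quotients. For the reverse, given $u$ with $\langle \bar v, \bar u\rangle$ soluble, the second isomorphism theorem gives $\langle v,u\rangle/(\langle v,u\rangle \cap R(G)) \cong \langle \bar v,\bar u\rangle$, exhibiting $\langle v,u\rangle$ as an extension of $\langle v,u\rangle \cap R(G) \leq R(G)$ (soluble) by the soluble $\langle \bar v,\bar u\rangle$, hence soluble itself. This gives $|\mathsf{Sol}_G(v)| = |R(G)|\cdot |\mathsf{Sol}_{\bar G}(\bar v)|$, and substituting into $\deg(v) = |\mathsf{Sol}_G(v)| - |R(G)| - 1$ yields
$$
\deg(v) \;=\; |R(G)|\bigl(|\mathsf{Sol}_{\bar G}(\bar v)| - 1\bigr) - 1.
$$

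For the core bound, I would show $|\mathsf{Sol}_H(w)| \geq 10$ for every insoluble $H$ with $R(H)=1$ and every $w \in H \setminus \{1\}$. This bound is sharp, realized by a $5$-cycle in $A_5$ where $\mathsf{Sol}_{A_5}((12345)) = N_{A_5}(\langle (12345)\rangle) = D_{10}$. I would split on $|w|$. If $|w| \geq 10$, then $\langle w\rangle \subseteq \mathsf{Sol}_H(w)$ already has at least $10$ elements. If $|w| \leq 9$, I would exhibit a soluble overgroup of $w$ of order at least $10$: for any Sylow $p$-subgroup $P$ containing $w$, the normalizer $N_H(P)$ lies in $\mathsf{Sol}_H(w)$, since each $g \in N_H(P)$ generates with $w$ a subgroup inside $P \langle g\rangle$, an extension of a $p$-group by a cyclic group and hence soluble; when $N_H(P)$ is itself too small (e.g., $N_{A_5}(\langle (123)\rangle) = S_3$ has only six elements) one enlarges to a larger maximal soluble overgroup such as $A_4 \leq A_5$, via a structural case analysis exploiting that the socle of $H$ is a direct product of non-abelian simple groups.

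Combining both steps gives $\deg(v) \geq 9|R(G)| - 1$, which is at least $8$ when $R(G) = 1$ and at least $17$ when $|R(G)| \geq 2$. The main obstacle is the core bound: when $w$ has small order, no single canonical soluble overgroup (cyclic, Sylow, Sylow-normalizer) is guaranteed to contain at least $10$ elements, so the argument must draw on deeper structural information about maximal soluble subgroups of insoluble groups with trivial soluble radical. Note that Lemma~\ref{G-Sol}, which bounds the complement of $\mathsf{Sol}_G(x)$ from below, goes in the wrong direction to contribute here and is instead destined for the upper bound on $\Delta_s(G)$.
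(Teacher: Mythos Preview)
Your reduction and the final inequality $\deg(v)\ge 9\,|R(G)|-1$ match the paper exactly: the paper uses the same identity $|\mathsf{Sol}_G(v)|=|R(G)|\cdot|\mathsf{Sol}_{G/R(G)}(vR(G))|$ and the same arithmetic. The one point of divergence is the core bound $|\mathsf{Sol}_H(w)|\ge 10$: the paper does not prove this but simply invokes \cite[Corollary~4.7]{Akbari1}, applying it once in $G$ (for $R(G)=1$) and once in $G/R(G)$ (for $|R(G)|\ge 2$). So the ``main obstacle'' you correctly isolate is disposed of by citation rather than by an argument internal to the paper.

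On your sketch toward a direct proof of that bound: the inclusion $N_H(P)\subseteq\mathsf{Sol}_H(w)$ for a Sylow $p$-subgroup $P\ni w$ is valid, but it presupposes that $w$ is a $p$-element; among orders at most $9$ the case $|w|=6$ is not a prime power and would need separate handling. Beyond that, as you already acknowledge, the Sylow normalizer can be too small (your $S_3\le A_5$ example), and the ``structural case analysis'' you gesture at is genuinely where the work lies---that is precisely what the cited result in \cite{Akbari1} supplies. Your closing remark that Lemma~\ref{G-Sol} bounds the complement and is therefore aimed at $\Delta_s(G)$, not $\delta_s(G)$, is also correct.
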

\begin{proof}
Let $|\mathsf{Sol}_G(x)|=s$ and $|R(G)|=r$. Then for any $x\in G\setminus R(G)$, $\deg(x)=s-r-1$. By \cite[Corollary 4.7]{Akbari1},  $s\ge 10$. So if $r=1$,  then $\deg(x)\ge 8$. Assume that $r\neq 1$. Since 
$$
|\mathsf{Sol}_{G/R(G)}(xR(G))|=\frac{|\mathsf{Sol}_G(x)|}{|R(G)|}=\frac{s}{r}\geq 10,
$$
so $\deg(x)\ge 9r-1$. Now $r\ge 2$, implies that $\deg(x)\ge 17$.
\end{proof}
\begin{remark}
By GAP \cite{GAP} we can see that if $G=A_{5}\times \mathbb{Z}_{2}$  and $x$ is an element of order $5$ in  $G$, then $\deg(x)=17$. Also for $G=A_{5}$ and $x=(1~2~3~4~5)\in A_{5}$, we have $\deg(x)=8$.
This example shows that the bounds in Proposition \ref{delta} are sharp.
\end{remark}

\begin{proposition}\label{D}
In the solubility graph $\Gamma_{\rm S}(G)$, $\Delta_{s}(G)\leq n-7$, where $n$ is the number of vertices in $\Gamma_{\rm S}(G)$.
\end{proposition}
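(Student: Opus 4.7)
The plan is to reduce the claim directly to Lemma \ref{G-Sol}(b). Recall that for each vertex $x \in G \setminus R(G)$ we have the identity
\[
\deg(x) = |\mathsf{Sol}_G(x)| - |R(G)| - 1,
\]
while the number of vertices of $\Gamma_{\rm S}(G)$ is $n = |G| - |R(G)|$. Rewriting, we see that the inequality $\deg(x) \le n-7$ is equivalent to $|G| - |\mathsf{Sol}_G(x)| \ge 6$.

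Therefore I would simply fix an arbitrary $x \in G \setminus R(G)$, invoke Lemma \ref{G-Sol}(b) to obtain $|G| - |\mathsf{Sol}_G(x)| \ge 6$, and then chain the substitutions:
\[
\deg(x) \;=\; |\mathsf{Sol}_G(x)| - |R(G)| - 1 \;\le\; (|G|-6) - |R(G)| - 1 \;=\; n - 7.
\]
Taking the maximum over $x$ gives $\Delta_s(G) \le n-7$.

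There is no real obstacle here; the proposition is an immediate corollary of part (b) of the preceding lemma, repackaged in the graph-theoretic language of degrees. The only thing worth emphasising in the write-up is that Lemma \ref{G-Sol}(b) applies uniformly to every vertex, so the bound holds for the maximum degree without any case analysis on the order of $x$.
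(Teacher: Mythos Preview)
Your argument is correct and is essentially identical to the paper's own proof: both fix an arbitrary vertex, write $\deg(x)=|\mathsf{Sol}_G(x)|-|R(G)|-1$, and apply Lemma~\ref{G-Sol}(b) to convert $|G|-|\mathsf{Sol}_G(x)|\ge 6$ into $\deg(x)\le n-7$. There is nothing to add.
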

\begin{proof}
We know that there exists $n=|G|-|R(G)|$ vertices in $\Gamma_{\rm S}(G)$. Assume that $v$ is an arbitrary vertex in $\Gamma_{\rm S}(G)$.  Applying Lemma \ref{G-Sol}, we have
$$
\deg(v)=|\mathsf{Sol}_G(v)|-|R(G)|-1=(|\mathsf{Sol}_G(v)|-|G|)+n-1\leq n-7,
$$
so the result holds.
\end{proof}
\begin{proposition}\label{deg p-1}
Let $G$ be a finite insoluble group and $\Gamma_{\rm S}(G)$ be its solubility graph. If for a prime number such as $p$, the degree of a vertex of
$\Gamma_{\rm S}(G)$ is $p-1$, then $R(G)=1$.
\end{proposition}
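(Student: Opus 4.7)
The plan is to argue by contradiction: assume $R(G) \neq 1$ and use divisibility and the bound from \cite[Corollary 4.7]{Akbari1} (the same bound invoked in the proof of Proposition~\ref{delta}) to force an arithmetic contradiction.

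The starting point is the observation that $R(G) \subseteq \mathsf{Sol}_G(v)$ for every $v \in G$. Indeed, since $R(G)$ is normal and soluble, the subgroup $\langle v\rangle R(G)$ is soluble, so $\langle v, g\rangle$ is soluble for every $g \in R(G)$. Writing $r = |R(G)|$ and $s = |\mathsf{Sol}_G(v)|$, this gives $r \mid s$. Moreover, as in the proof of Proposition~\ref{delta}, one has $|\mathsf{Sol}_{G/R(G)}(vR(G))| = s/r \geq 10$.

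Now suppose $\deg(v) = p - 1$ for some prime $p$. From $\deg(v) = s - r - 1$ we obtain $s = p + r$. Combining this with $r \mid s$ yields $r \mid p$, so (as $p$ is prime) either $r = 1$ or $r = p$. If $r = p$, then $s = 2p$ and $s/r = 2$, which contradicts $s/r \geq 10$. Therefore $r = 1$, that is, $R(G) = 1$.

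The argument is short and relies on two ingredients: the containment $R(G) \subseteq \mathsf{Sol}_G(v)$ (which gives the divisibility $r \mid s$) and the lower bound $s/r \geq 10$ already used in Proposition~\ref{delta}. The only step requiring any care is justifying $R(G) \subseteq \mathsf{Sol}_G(v)$, but this follows immediately from the solubility of $\langle v\rangle R(G)$; no delicate case analysis or appeal to the classification is needed.
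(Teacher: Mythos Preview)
Your argument is correct and essentially identical to the paper's: both deduce $s-r=p$, use $r\mid s$ to get $r\in\{1,p\}$, and rule out $r=p$ via $|\mathsf{Sol}_{G/R(G)}(vR(G))|=s/r=2$, which contradicts the lower bound from \cite[Corollary~4.7]{Akbari1}. One small expository point: the containment $R(G)\subseteq\mathsf{Sol}_G(v)$ by itself does \emph{not} yield $r\mid s$, since $\mathsf{Sol}_G(v)$ need not be a subgroup; the divisibility really comes from the identity $|\mathsf{Sol}_{G/R(G)}(vR(G))|=s/r$ that you quote immediately afterwards (equivalently, $\mathsf{Sol}_G(v)$ is a union of $R(G)$-cosets), so you should attribute $r\mid s$ to that fact rather than to the containment.
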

\begin{proof}
Assume that $x$ is a vertex of $\Gamma_{\rm S}(G)$ with degree $p-1$. Then 
$|\mathsf{Sol}_G(x)|-|R(G)|=p$. Since $|R(G)|$ divides $|\mathsf{Sol}_G(x)|$, so $|R(G)|=1~ \text{or}~ p$.  If $|R(G)|=p$, 
then $|\mathsf{Sol}_G(x)|=2p$ and $|\mathsf{Sol}_{G/R}(xR)|=\frac{|\mathsf{Sol}_G(x)|}{|R(G)|}=2,$ a contradiction. So $|R(G)|=1$ and the result holds.
\end{proof}

\begin{proposition}
Let $G$ be a finite group such that for all $x\in G\setminus R(G)$, $|\mathsf{Sol}_G(x)|\ge \frac{|G|+|R(G)|}{2}+1$. Then $\Gamma_{\rm S}(G)$ is Hamiltonian.	
\end{proposition}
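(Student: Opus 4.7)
The plan is to apply \emph{Dirac's theorem}, which states that a simple graph on $n\geq 3$ vertices with minimum degree at least $n/2$ is Hamiltonian. So the entire task reduces to showing that the hypothesis forces $\delta_{s}(G)\geq n/2$, where $n=|G|-|R(G)|$ is the number of vertices of $\Gamma_{\rm S}(G)$, and to checking that $n\geq 3$.

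First I would translate the hypothesis into a degree bound. For any $x\in G\setminus R(G)$ we have the identity $\deg(x)=|\mathsf{Sol}_G(x)|-|R(G)|-1$ recalled in the introduction. Substituting the assumed inequality $|\mathsf{Sol}_G(x)|\geq \frac{|G|+|R(G)|}{2}+1$ gives
\[
\deg(x)\;\geq\; \frac{|G|+|R(G)|}{2}+1-|R(G)|-1 \;=\; \frac{|G|-|R(G)|}{2}\;=\;\frac{n}{2}.
\]
Thus $\delta_{s}(G)\geq n/2$ by taking the minimum over $x$.

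Next I would check the mild size condition $n\geq 3$ needed to invoke Dirac. Since $G$ is insoluble, the quotient $G/R(G)$ has trivial soluble radical and is insoluble, so it must contain a non-abelian simple composition factor; in particular $|G/R(G)|\geq 60$. Therefore
\[
n=|G|-|R(G)|=|R(G)|\bigl(|G/R(G)|-1\bigr)\geq 59,
\]
which comfortably exceeds $3$.

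With both conditions in place I would conclude by quoting Dirac's theorem: $\Gamma_{\rm S}(G)$ has $n\geq 3$ vertices and minimum degree at least $n/2$, hence it is Hamiltonian. There is no genuine obstacle here; the only subtle point is bookkeeping the $+1$ in the hypothesis to make sure the inequality $\deg(x)\geq n/2$ is clean (without the $+1$ the best one would get is $\deg(x)\geq n/2 -1$, which is why the hypothesis is stated in that slightly strengthened form).
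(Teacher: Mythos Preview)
Your argument is correct and matches the paper's approach exactly: both compute $\deg(x)\ge \frac{|G|-|R(G)|}{2}$ from the hypothesis and then invoke Dirac's theorem. Your explicit verification that $n\ge 3$ (indeed $n\ge 59$) is a useful addition that the paper leaves implicit.
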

\begin{proof}
Let $v\in G\setminus R(G)$. Then
\begin{align}
\deg(v)&=|\mathsf{Sol}_G(v)|-|R(G)|-1\nonumber\\
&\ge \frac{|G|+|R(G)|}{2}-|R(G)|\nonumber \\
&=\frac{|G|-|R(G)|}{2},\nonumber
\end{align}
so the result holds by Dirac's theorem (see \cite{Bondy}).
\end{proof}
\begin{proposition}
Let $G$ be a finite insoluble group. Then for every $v\in G\setminus R(G)$, we have
$$
\frac{1+\deg (v)}{1+\deg (vR(G))}=|R(G)|.
$$

Furthermore $\deg (\Gamma_{\rm S}(G))=\deg (\Gamma_{\rm S}(G/R(G)))$.
\end{proposition}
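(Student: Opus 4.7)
The plan is to derive both assertions from the single identity
$$|\mathsf{Sol}_G(v)|=|R(G)|\cdot|\mathsf{Sol}_{G/R(G)}(vR(G))|,$$
which already appeared implicitly in the proof of Proposition~\ref{delta}. This identity, combined with the defining formula $\deg(w)=|\mathsf{Sol}_H(w)|-|R(H)|-1$ applied in both $G$ and $G/R(G)$, forces everything.

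First I would justify the solubilizer identity. Let $\pi: G\to G/R(G)$ be the canonical projection. For any $g\in G$, the subgroup $\langle g,v\rangle$ is soluble if and only if its image $\pi(\langle g,v\rangle)=\langle\pi(g),\pi(v)\rangle$ is soluble: the forward direction is the standard fact that quotients of soluble groups are soluble, and for the converse one observes that if $\pi(\langle g,v\rangle)$ is soluble then $\langle g,v\rangle R(G)$ is soluble (as an extension of $R(G)$ by a soluble group), so its subgroup $\langle g,v\rangle$ is soluble too. Hence $\pi^{-1}\!\left(\mathsf{Sol}_{G/R(G)}(vR(G))\right)=\mathsf{Sol}_G(v)$, and counting over the fibres of $\pi$ (each of size $|R(G)|$) yields the identity.

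Next I would substitute. Applying the degree formula in the two graphs, and using $R(G/R(G))=1$, gives
\begin{align*}
1+\deg(v) &= |\mathsf{Sol}_G(v)|-|R(G)|,\\
1+\deg(vR(G)) &= |\mathsf{Sol}_{G/R(G)}(vR(G))|-1.
\end{align*}
Plugging in $|\mathsf{Sol}_G(v)|=|R(G)|\cdot|\mathsf{Sol}_{G/R(G)}(vR(G))|$ and factoring $|R(G)|$ out of the numerator immediately gives the ratio $|R(G)|$, which is the main identity.

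For the furthermore clause, rewrite the ratio as $\deg(v)=|R(G)|\bigl(1+\deg(vR(G))\bigr)-1$. Thus $\deg(v)$ is an affine, hence injective, function of $\deg(vR(G))$, and so the assignment $v\mapsto vR(G)$ induces a bijection between $\deg(\Gamma_{\rm S}(G))$ and $\deg(\Gamma_{\rm S}(G/R(G)))$, witnessing the claimed equality of vertex-degree sets. The only mildly delicate point in the whole argument is the solubilizer correspondence under $\pi$, which I expect to be the main step to spell out carefully; everything else is formal substitution once that identity is in hand.
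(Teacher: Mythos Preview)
Your proof is correct and follows essentially the same route as the paper: both hinge on the identity $|\mathsf{Sol}_G(v)|=|R(G)|\cdot|\mathsf{Sol}_{G/R(G)}(vR(G))|$ and then substitute into the two degree formulas. You are more thorough than the paper in two respects---you spell out why the solubilizer identity holds via the projection $\pi$, and you explicitly handle the ``furthermore'' clause by reading the ratio as an injective affine map on degrees---whereas the paper simply quotes the identity and leaves the second assertion implicit.
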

\begin{proof}
We have
\begin{eqnarray*}
1+\deg (v)&=&|\mathsf{Sol}_G(v)|-|R(G)|\\
&=&|R(G)||\mathsf{Sol}_{G/R(G)}(vR(G))|-|R(G)|\\
&=&|R(G)|(|\mathsf{Sol}_{G/R(G)}(vR(G))|-1)
\end{eqnarray*}
Also we have
$$
1+\deg (vR(G))=|\mathsf{Sol}_{G/R(G)}(vR(G))|-1.
$$
By combining the two previous relations, the result is obtained.
\end{proof}
\section{\bf The number of edges and the solubility degree}
The solubility degree of a finite group $G$ (see \cite{BHOWAL}) is the probability that two randomly chosen elements of $G$ generate a soluble group. It is  is  given by  
$P_{s}(G)=\frac{|\mathbb{S}|}{|G|^2}$, where 
$$
\mathbb{S}=\{ (x, y)\in G\times G ~|~ \langle x, y\rangle ~ \text{is soluble}\}.
$$
 It is not difficult to see that $|\mathbb{S}|=\sum_{x\in G}|\mathsf{Sol}_{G}(x)|$. Thus 
$$
P_{s}(G)=\frac{1}{|G|^2}\sum_{x\in G}|\mathsf{Sol}_{G}(x)|.
$$
  Notice that $G$ is soluble if and only if $P_{s}(G)=1$. It is known (see \cite{Guralnick2}) that $P_{s}(G)\leq \frac{11}{30}$. By using GAP \cite{GAP}, we can see that $ P_{s}(A_5)=\frac{11}{30}$, which shows that the bound is sharp. In \cite{BHOWAL}, it is proved that if $G$ is a finite group, then $P_{s}(G)\geq Pr(G)$ and the equality holds if and only if $G$ is a soluble group, where $Pr(G)$ is the commutativity degree of $G$. However, we provide a counterexample where the equality condition is not true. Additionally, we provide a condition for equality in the theorem.
\begin{example}
Consider $G=S_{3}$. Then $Pr(G)=\frac{k(G)}{|G|}=\frac{1}{3}$, where $k(G)$ is the number of the conjugacy classes of $G$. Since $G$ is soluble, we have 
$P_{s}(G)=1$.
\end{example}

\begin{proposition}\label{solubility-commutativity}
Let $G$ be a finite group. Then $P_{s}(G)\geq Pr(G)$ and the equality holds if and only if $G$ is an abelian group, where $Pr(G)$ is the commutativity degree of $G$.
\end{proposition}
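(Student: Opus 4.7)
My plan is to establish the inequality by a direct set inclusion and to derive the equality condition through a dichotomy on whether $G$ is soluble. The inequality is immediate from the observation that every commuting pair $(x,y)\in G\times G$ generates the abelian, hence soluble, subgroup $\langle x,y\rangle$; thus the set of commuting pairs is contained in $\mathbb{S}$, and dividing by $|G|^2$ yields $P_s(G)\ge Pr(G)$. When $G$ is abelian both probabilities equal $1$, giving one implication of the equality.

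For the converse, I would rewrite the hypothesis $P_s(G)=Pr(G)$ as $\mathbb{S}=\{(x,y)\in G\times G:xy=yx\}$, which says that every pair generating a soluble subgroup must already commute. This forbids any non-abelian soluble subgroup $H\le G$, since such an $H$ would contain non-commuting $x,y$ with $\langle x,y\rangle\le H$ soluble. In particular, if $G$ itself is soluble it cannot be non-abelian, so $G$ is abelian and we are done in this case.

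To finish, I would rule out the possibility that $G$ is insoluble by producing a non-abelian soluble subgroup from the Sylow structure. Each Sylow $p$-subgroup $P$ must be abelian (otherwise $P$ itself is non-abelian soluble). For any $g\in N_G(P)$ and any $x\in P$ with $[g,x]\ne 1$, the subgroup $A=\langle x\rangle^{\langle g\rangle}$ lies in the abelian group $P$ (since $g$ normalizes $P$), is normal in $\langle x,g\rangle$, and has cyclic quotient $\langle x,g\rangle/A$; hence $\langle x,g\rangle$ is a non-abelian soluble subgroup of $G$, a contradiction. Therefore $N_G(P)=C_G(P)$ for every Sylow subgroup $P$, and Burnside's normal $p$-complement theorem provides a normal $p$-complement for every prime $p$ dividing $|G|$. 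This forces every Sylow of $G$ to be normal, so $G$ is nilpotent and hence, since all its Sylows are abelian, abelian---contradicting insolubility. The main obstacle is precisely this insoluble subcase: the pairwise hypothesis has to be upgraded to the uniform Sylow-centralizer condition needed to invoke Burnside's theorem.
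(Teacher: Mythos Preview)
Your proof is correct but follows a genuinely different route from the paper's. The paper observes that equality forces $\mathcal{C}_G(x)=\mathsf{Sol}_G(x)$ for every $x$, so each solubilizer is a subgroup; it then invokes an external result (Hai--Reuven) asserting that this forces $G$ to be soluble, whence $G=R(G)=\bigcap_x \mathsf{Sol}_G(x)=\bigcap_x \mathcal{C}_G(x)=Z(G)$ and $G$ is abelian. Your argument instead reformulates equality as ``every soluble subgroup of $G$ is abelian'' and handles the soluble and insoluble cases separately, disposing of the latter with an elementary Sylow/Burnside-transfer argument: abelian Sylows plus $N_G(P)=C_G(P)$ gives normal $p$-complements for all $p$, hence nilpotence, hence abelianness. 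The paper's proof is shorter but depends on a cited black box; yours is self-contained and classical, and in fact proves the slightly sharper intermediate statement that a finite group all of whose soluble subgroups are abelian is itself abelian. One expository nit: the implication ``normal $p$-complement for every $p$ $\Rightarrow$ nilpotent'' is the real step (e.g.\ via $\bigcap_p N_p=1$ and embedding $G$ into $\prod_p G/N_p$), from which normality of the Sylows then follows, rather than the other way around as your last sentence suggests; you may wish to reorder that clause.
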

\begin{proof}
We know that $Pr(G)=\frac{1}{|G|^2}\sum_{x\in G}|\mathcal{C}_{G}(x)|$. Now $\mathcal{C}_{G}(x)\subseteq \mathsf{Sol}_{G}(x)$ implies that $P_{s}(G)\geq Pr(G)$. If 
$P_{s}(G)=Pr(G)$, then $|\mathcal{C}_{G}(x)|=|\mathsf{Sol}_{G}(x)|$, for every $x\in G$. This implies that for every $x\in G$, $\mathsf{Sol}_{G}(x)$ is a subgroup of $G$. Thus $G$ is soluble by \cite[Proposition 2.22]{Hai}. Therefore
$$
G=R(G)=\bigcap_{x\in G}\mathsf{Sol}_{G}(x)=\bigcap_{x\in G} \mathcal{C}_{G}(x)=Z(G),
$$
 and the result holds. The converse is obvious.
\end{proof}
\begin{proposition}\label{quotiont degree}
Let $G$ be an insoluble finite group and $N$ a normal subgroup of $G$. Then
$P_{s}(G)\leq P_{s}(G/N)$. Furthermore, if $N$ is soluble, then the equality holds.
\end{proposition}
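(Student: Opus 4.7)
The plan is to use the natural projection $\pi \colon G\times G\to (G/N)\times (G/N)$, which is surjective with every fibre of size $|N|^2$, and compare the two "soluble-pair" sets. Write
$$
\mathbb{S}_G=\{(x,y)\in G\times G \mid \langle x,y\rangle \text{ is soluble}\},\qquad \mathbb{S}_{G/N}=\{(\bar x,\bar y)\in (G/N)^2 \mid \langle \bar x,\bar y\rangle \text{ is soluble}\},
$$
so that $P_s(G)=|\mathbb{S}_G|/|G|^2$ and $P_s(G/N)=|\mathbb{S}_{G/N}|/|G/N|^2$.

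First I would verify $\pi(\mathbb{S}_G)\subseteq \mathbb{S}_{G/N}$: if $\langle x,y\rangle$ is soluble, then $\langle xN,yN\rangle=\langle x,y\rangle N/N$ is a quotient of a soluble group, hence soluble. Because $\pi$ is $|N|^2$-to-$1$, this gives
$$
|\mathbb{S}_G|\le |N|^2\,|\pi(\mathbb{S}_G)|\le |N|^2\,|\mathbb{S}_{G/N}|,
$$
and dividing by $|G|^2=|N|^2|G/N|^2$ yields the inequality $P_s(G)\le P_s(G/N)$.

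For the equality statement, I would prove the reverse containment $\pi^{-1}(\mathbb{S}_{G/N})\subseteq \mathbb{S}_G$ under the hypothesis that $N$ is soluble. If $(xN,yN)\in \mathbb{S}_{G/N}$, then $\langle x,y\rangle N/N\cong \langle x,y\rangle/(\langle x,y\rangle\cap N)$ is soluble, and $\langle x,y\rangle\cap N$ is a subgroup of the soluble group $N$ and hence soluble; since an extension of a soluble group by a soluble group is soluble, $\langle x,y\rangle$ is soluble. Combined with the first step this gives $\pi^{-1}(\mathbb{S}_{G/N})=\mathbb{S}_G$, so $|\mathbb{S}_G|=|N|^2|\mathbb{S}_{G/N}|$ and therefore $P_s(G)=P_s(G/N)$.

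There is no real obstacle here; the only thing to be careful about is that the bijection-on-fibres argument requires the forward implication "$\langle x,y\rangle$ soluble $\Rightarrow$ $\langle xN,yN\rangle$ soluble," which holds unconditionally, whereas the converse needed for equality uses solubility of $N$ in an essential way. I would highlight this asymmetry, and also note in passing that the bound $P_s(G)\le P_s(G/N)$ holds for \emph{any} normal subgroup $N$, insoluble or not, since the forward implication never requires solubility of $N$.
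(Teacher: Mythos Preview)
Your argument is correct and is essentially the same as the paper's, just packaged differently: the paper sums the solubilizer sizes and invokes the cited inequality $|\mathsf{Sol}_{G/N}(xN)|\ge |\mathsf{Sol}_G(x)|/|N|$ (with equality for soluble $N$) from \cite{Poozesh}, whereas you work directly with the pair sets $\mathbb{S}_G$ and $\mathbb{S}_{G/N}$ and the $|N|^2$-to-$1$ projection. The underlying content is identical---``quotients of soluble groups are soluble'' for the inequality, and ``soluble-by-soluble is soluble'' for the equality---but your version is fully self-contained, while the paper outsources those two implications to an external lemma.
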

\begin{proof}
By \cite[Lemma 2.3]{Poozesh}, we have
	\begin{align}
	|G/N|^2P_{s}(G/N)&=\sum_{xN\in G/N}|\mathsf{Sol}_{G/N}(xN)|\nonumber\\
&=\frac{1}{|N|}\sum_{x\in G}|\mathsf{Sol}_{G/N}(xN)|\nonumber\\
&\geq  \frac{1}{|N|}\sum_{x\in G}\frac{|\mathsf{Sol}_{G}(x)|}{|N|}\nonumber
\\
&=\frac{|G|^2}{|N|^2} P_{s}(G),\nonumber
\end{align}
so the result holds. If $N$ is soluble, then by \cite[Lemma 2.3]{Poozesh} we have $|\mathsf{Sol}_{G/N}(xN)|=\frac{|\mathsf{Sol}_{G}(x)|}{|N|}$, so the equality holds.
\end{proof}
Applying Proposition \ref{quotiont degree}, we deduce the following corollary.
\begin{corollary}
If $G/R(G)\cong H/R(H)$, then $P_{s}(G)=P_{s}(H)$.
\end{corollary}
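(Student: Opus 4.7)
The plan is to apply Proposition \ref{quotiont degree} twice, using the fact that the soluble radical $R(G)$ is itself a soluble normal subgroup of $G$. Since $R(G) \nor G$ and $R(G)$ is soluble, the equality clause of Proposition \ref{quotiont degree} yields $P_{s}(G) = P_{s}(G/R(G))$. The same reasoning applied to $H$ gives $P_{s}(H) = P_{s}(H/R(H))$.

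From the hypothesis $G/R(G) \cong H/R(H)$, it is immediate that $P_{s}(G/R(G)) = P_{s}(H/R(H))$, since the solubility degree is an isomorphism invariant (it is defined purely in terms of the group operation via the set $\mathbb{S}$). Chaining the three equalities together produces $P_{s}(G) = P_{s}(H)$, as desired.

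There is essentially no obstacle here: the statement is a direct consequence of the soluble-radical case of the previous proposition, and the only thing to verify is that one may indeed quotient out by $R(G)$ and $R(H)$ under the equality clause, which is immediate because $R(G)$ and $R(H)$ are soluble by definition. The proof will therefore be a short two-line chain of equalities.
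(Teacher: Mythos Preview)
Your argument is correct and is precisely the approach the paper intends: it states the corollary immediately after Proposition~\ref{quotiont degree} with the remark that it follows by applying that proposition, and your two applications with $N=R(G)$ and $N=R(H)$ (both soluble normal subgroups) are exactly what is meant.
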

\begin{proposition}
If $G$ and $H$ are two finite groups, then $P_{s}(G\times H)\geq P_s(G) P_s(H)$. If $G$ or $H$ is soluble (in particular $(|G|,|H|)=1$), then the equality holds.
\end{proposition}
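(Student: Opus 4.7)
The plan is to identify $(G\times H)^2$ with $G^2\times H^2$ via the coordinate swap $((x_1,h_1),(x_2,h_2))\leftrightarrow((x_1,x_2),(h_1,h_2))$. Under this bijection, writing $\mathbb{S}_G, \mathbb{S}_H, \mathbb{S}_{G\times H}$ for the soluble-generating pair sets of the three groups, the inequality $P_s(G\times H)\ge P_s(G)P_s(H)$ reduces, after dividing by $|G|^2|H|^2$, to the purely combinatorial claim $|\mathbb{S}_{G\times H}|\ge |\mathbb{S}_G|\cdot|\mathbb{S}_H|$.

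The heart of the argument is then the group-theoretic observation that whenever $\langle x_1,x_2\rangle\le G$ and $\langle h_1,h_2\rangle\le H$ are both soluble, the subgroup $\langle (x_1,h_1),(x_2,h_2)\rangle\le G\times H$ is also soluble. Indeed, every word in the two generators has first coordinate a word in $x_1,x_2$ and second coordinate a word in $h_1,h_2$, so $\langle (x_1,h_1),(x_2,h_2)\rangle\le \langle x_1,x_2\rangle\times\langle h_1,h_2\rangle$; the right-hand side is a direct product of two soluble groups, hence soluble, and so any subgroup of it is soluble. Under the identification above this shows $\mathbb{S}_G\times\mathbb{S}_H\subseteq \mathbb{S}_{G\times H}$, which is the required inequality.

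For the equality statement I would argue by symmetry: suppose without loss of generality that $G$ is soluble. Then $N=G\times\{1_H\}$ is a normal soluble subgroup of $G\times H$, so Proposition~\ref{quotiont degree} yields $P_s(G\times H)=P_s((G\times H)/N)=P_s(H)$, while $P_s(G)=1$ gives $P_s(G)P_s(H)=P_s(H)$, so equality holds. The parenthetical case $(|G|,|H|)=1$ reduces to this: at least one of $|G|,|H|$ is odd, and so by the Feit--Thompson theorem one of $G,H$ is soluble. I do not foresee a genuine obstacle in the proof, since solubility is closed under subgroups and finite direct products; the only point demanding a little care is the coordinate-wise containment $\langle (x_1,h_1),(x_2,h_2)\rangle\le\langle x_1,x_2\rangle\times\langle h_1,h_2\rangle$ noted above.
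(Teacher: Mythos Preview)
Your proof is correct and follows essentially the same approach as the paper: both use the containment $\langle (x_1,h_1),(x_2,h_2)\rangle\le \langle x_1,x_2\rangle\times\langle h_1,h_2\rangle$ to get the inequality (the paper phrases it via $\mathsf{Sol}_G(g)\times\mathsf{Sol}_H(s)\subseteq\mathsf{Sol}_{G\times H}(g,s)$ and sums, which is equivalent to your $\mathbb{S}_G\times\mathbb{S}_H\subseteq\mathbb{S}_{G\times H}$), and both invoke Proposition~\ref{quotiont degree} with the soluble factor as the normal subgroup for the equality case. You also supply the Feit--Thompson justification for the coprime-order remark, which the paper leaves implicit.
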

\begin{proof}
For any $(g, s), (x, t)\in G\times H$, we have 
$$
\langle (g, s), (x, t)\rangle\subseteq \langle g, x\rangle \times \langle t, s\rangle.
$$
 So 
$$
\mathsf{Sol}_{G}(g)\times \mathsf{Sol}_{H}(s)\subseteq \mathsf{Sol}_{G\times H}(g, s)
$$
 Thus
\begin{eqnarray*}
P_s(G) P_s(H)&=&\frac{1}{|G|^2}\sum_{x\in G}|\mathsf{Sol}_{G}(x)|\frac{1}{|H|^2}\sum_{t\in H}|\mathsf{Sol}_{H}(t)|\\
&=&\frac{1}{|G|^2 |H|^2}\sum_{x\in G}\sum_{t\in H}|\mathsf{Sol}_{G}(x)||\mathsf{Sol}_{H}(t)|\\
&\leq& \frac{1}{|G\times H|^2}\sum_{(x,t)\in G\times H}|\mathsf{Sol}_{G\times H}(x, t)|\\
&=&P_s(G\times H).
\end{eqnarray*}
If $H$ is soluble, then by Proposition \ref{quotiont degree}, we have
$$
P_s(G\times H)\leq P_s(G\times H/H)=P_s(G)=P_s(G) P_s(H).
$$
Therefore we obtain the equality.
\end{proof}
Let $|E(\Gamma_{\rm S}(G))|$ be the number of edges of the graph $\Gamma_{\rm S}(G)$. The next proposition gives a relation between $|E(\Gamma_{\rm S}(G))|$ and $P_{s}(G)$. Also. it gives a sufficient condition for the solubility graph $\Gamma_{\rm S}(G)$ to be Hamiltonian.
\begin{proposition}\label{No Edges}
Let $G$ be a finite insoluble group. Then
\begin{itemize}
\item[{\bf (i)}] $2|E(\Gamma_{\rm S}(G))|=|G|^2P_s(G)+|R(G)|^2+|R(G)|-|G|(2|R(G)|+1)$.
\item[{\bf (ii)}] If $P_s(G)\ge 1-\frac{2}{|G|}+\frac{2|R(G)|}{|G|^2}+\frac{4}{|G|^2}$, then $\Gamma_{\rm S}(G)$ is Hamiltonian.
\end{itemize}

\end{proposition}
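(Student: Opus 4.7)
For part (i), my plan is the handshake lemma. Since $\deg(v)=|\mathsf{Sol}_G(v)|-|R(G)|-1$ for every vertex $v\in G\setminus R(G)$, summing gives
\[
2|E(\Gamma_{\rm S}(G))|=\sum_{v\in G\setminus R(G)}|\mathsf{Sol}_G(v)|-(|G|-|R(G)|)(|R(G)|+1).
\]
The remaining sum I would rewrite as $\sum_{x\in G}|\mathsf{Sol}_G(x)|-\sum_{x\in R(G)}|\mathsf{Sol}_G(x)|=|G|^{2}P_s(G)-\sum_{x\in R(G)}|\mathsf{Sol}_G(x)|$, using the identity $|\mathbb{S}|=\sum_{x\in G}|\mathsf{Sol}_G(x)|$ recorded at the top of the section. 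The key sub-step is the claim that $\mathsf{Sol}_G(x)=G$ for every $x\in R(G)$: since $R(G)\trianglelefteq G$, the product $R(G)\langle g\rangle$ is a subgroup, and it has a normal soluble subgroup $R(G)$ with cyclic (hence soluble) quotient, so it is soluble; therefore $\langle x,g\rangle\sub R(G)\langle g\rangle$ is soluble for every $g\in G$. Thus $\sum_{x\in R(G)}|\mathsf{Sol}_G(x)|=|R(G)|\cdot|G|$, and expanding and collecting the resulting terms yields the identity in (i).

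For (ii), my plan is to feed the hypothesis on $P_s(G)$ back through the identity in (i) to obtain a lower bound on $|E(\Gamma_{\rm S}(G))|$. Setting $n=|G|-|R(G)|$ and multiplying the hypothesis by $|G|^{2}$ gives
\[
|G|^{2}P_s(G)\ge |G|^{2}-2|G|+2|R(G)|+4;
\]
adding $|R(G)|^{2}+|R(G)|-2|G||R(G)|-|G|$ to both sides and applying (i) yields $2|E(\Gamma_{\rm S}(G))|\ge n^{2}-3n+4$, i.e.\ $|E(\Gamma_{\rm S}(G))|\ge\binom{n-1}{2}+1$. The conclusion then follows from the classical edge-count criterion for Hamiltonicity (Ore, via the Bondy--Chvátal closure): a graph on $n\ge 3$ vertices with this many edges is Hamiltonian provided the unique extremal non-Hamiltonian configuration $K_{1}\vee(K_{n-2}\cup K_{1})$ is excluded, which it is here because that graph has a vertex of degree~$1$ whereas by Proposition~\ref{delta} every vertex of $\Gamma_{\rm S}(G)$ has degree at least~$8$.

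The main obstacle is the sub-claim in (i) that $\mathsf{Sol}_G(x)=G$ for $x\in R(G)$; once it is in hand, (i) is a routine rearrangement and (ii) reduces to substituting the hypothesis and quoting a standard graph-theoretic Hamiltonicity result. The only delicate point in (ii) is that the algebra lands exactly at the threshold $\binom{n-1}{2}+1$, so some care is needed to eliminate the unique extremal non-Hamiltonian graph at that edge count, which is done via the lower bound on $\delta_s(G)$ from the previous section.
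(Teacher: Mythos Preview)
Your proof is correct and follows essentially the same route as the paper's. For (i) the paper likewise applies the handshake lemma, uses $\deg(v)=|\mathsf{Sol}_G(v)|-|R(G)|-1$, and replaces $\sum_{v\in G\setminus R(G)}|\mathsf{Sol}_G(v)|$ by $\sum_{v\in G}|\mathsf{Sol}_G(v)|-|G||R(G)|$ (implicitly using $\mathsf{Sol}_G(x)=G$ for $x\in R(G)$, which you justify explicitly); for (ii) the paper performs the same substitution and algebra to reach $|E(\Gamma_{\rm S}(G))|\ge\binom{n-1}{2}+1$ and then simply invokes the ``Ore--Bondy Corollary'' from \cite{Bondy}.

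The only difference is your closing step in (ii): the paper treats the edge bound $\binom{n-1}{2}+1$ as immediately sufficient via the cited corollary, whereas you observe that this is exactly the threshold at which a non-Hamiltonian extremal configuration exists and use Proposition~\ref{delta} to rule it out. Your caution is well placed, since the standard corollary in Bondy--Murty requires $\binom{n-1}{2}+2$ edges; your appeal to $\delta_s(G)\ge 8$ cleanly disposes of the extremal graphs at the lower threshold (all of which, for $n\ge 6$, contain a vertex of degree at most~$1$). So your argument is in fact a little more careful than the paper's at this point, though the overall strategy is identical.
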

\begin{proof}
{\bf(i)} 
\begin{align}
2|E(\Gamma_{\rm S}(G))|&=\sum_{v\in G\setminus R(G)}\deg (v)\nonumber\\
&=\sum_{v\in G\setminus R(G)}(|\mathsf{Sol}_{G}(v)|-|R(G)|-1)\nonumber\\
&=\sum_{v\in G}|\mathsf{Sol}_{G}(v)|-|G||R(G)|-(|G|-|R(G)|)(|R(G)|+1)\nonumber\\
&=|G|^2P_s(G)+|R(G)|^2+|R(G)|-|G|(2|R(G)|+1)\nonumber
\end{align}
{\bf(ii)} By part $(i)$, we have
\begin{align}
2|E(\Gamma_{\rm S}(G))|&=|G|^2P_s(G)+|R(G)|^2+|R(G)|-|G|(2|R(G)|+1)\nonumber\\
&\geq |G|^2(1-\frac{2}{|G|}+\frac{2|R(G)|}{|G|^2}+\frac{4}{|G|^2})+|R(G)|^2+|R(G)|-2|R(G)||G|-|G| \nonumber\\
&=(|G|-|R(G)|)^2-3(|G|-|R(G)|)+4\nonumber\\
&=2\dbinom{|G|-|R(G)|-1}{2}+2,\nonumber
\end{align}
therefore 
$$
|E(\Gamma_{\rm S}(G))|\ge \dbinom{|G|-|R(G)|-1}{2}+1.
$$
 Then Ore-Bondy Corollary \cite{Bondy} implies that the solubility graph is Hamiltonian.
\end{proof}
\begin{proposition}\label{|E|>}
	Let $G$ be an insoluble finite group with $R(G)=1$.  Then 
	$$|E(\Gamma_{\rm S}(G))|\geq \frac{|G|}{2}(k(G)-3)+1,$$
and the equality holds if and only if $G$ is abelian.
	\end{proposition}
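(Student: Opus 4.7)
The plan is to start from Proposition \ref{No Edges}{\bf (i)} specialized to the hypothesis $R(G)=1$. Substituting $|R(G)|=1$ into that identity gives
$$
2|E(\Gamma_{\rm S}(G))| = |G|^2 P_s(G) + 2 - 3|G|,
$$
so the desired inequality $|E(\Gamma_{\rm S}(G))|\ge \frac{|G|}{2}(k(G)-3)+1$ is, after clearing the factor of $2$ and cancelling the $-3|G|+2$ terms on both sides, equivalent to
$$
|G|\,P_s(G) \ge k(G).
$$

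Next, I would invoke Proposition \ref{solubility-commutativity}, which asserts $P_s(G)\ge Pr(G)$. Combined with the standard identity $Pr(G)=k(G)/|G|$ (which follows from $Pr(G)=\frac{1}{|G|^2}\sum_{x\in G}|\mathcal{C}_G(x)|$ together with the orbit-counting expression $\sum_{x\in G}|\mathcal{C}_G(x)|=|G|\,k(G)$), this immediately yields $|G|P_s(G)\ge |G|Pr(G)=k(G)$, proving the inequality.

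For the equality case, Proposition \ref{solubility-commutativity} also records that $P_s(G)=Pr(G)$ holds exactly when $G$ is abelian. Tracing the chain of equivalences backwards shows that equality in the edge bound is equivalent to $|G|P_s(G)=k(G)$, hence to $G$ being abelian. The converse direction is immediate. (Strictly speaking, since the hypothesis forces $G$ to be insoluble, the equality is never attained, so the bound is strict in practice; the characterization is stated for completeness.)

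There is no real obstacle here: the argument is a direct translation of the two previously established identities — the edge-count formula of Proposition \ref{No Edges} and the comparison $P_s(G)\ge Pr(G)$ of Proposition \ref{solubility-commutativity} — via the classical $Pr(G)=k(G)/|G|$. The only point requiring mild care is keeping track of the constants $1,2,3$ arising from $|R(G)|=1$ when specializing part {\bf (i)} of Proposition \ref{No Edges}, and verifying that they cancel cleanly on both sides so that the problem reduces exactly to $P_s(G)\ge Pr(G)$.
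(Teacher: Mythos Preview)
Your proposal is correct and follows essentially the same route as the paper: specialize Proposition~\ref{No Edges}(i) to $R(G)=1$, apply $P_s(G)\ge Pr(G)$ from Proposition~\ref{solubility-commutativity}, and use the classical identity $Pr(G)=k(G)/|G|$. Your added remark that equality is vacuous under the insolubility hypothesis is a fair observation, though the paper does not dwell on it.
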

	\begin{proof}
By using Proposition \ref{No Edges} and Proposition \ref{solubility-commutativity}, we have
\begin{align}
	2|E(\Gamma_{\rm S}(G))|&=|G|^2P_s(G)+|R(G)|^2+|R(G)|-|G|(2|R(G)|+1)\nonumber\\
	&\geq |G|^{2}Pr(G)+2-3|G|\nonumber\\
		&=  |G|^2\frac{k(G)}{|G|}+2-3|G|\nonumber\\
		&=|G|(k(G)-3)+2.\nonumber
\end{align}
The equality holds if and only if $P_s(G) = Pr(G)$, which is equivalent to the condition that $G$ is an abelian group, as stated in Proposition \ref{solubility-commutativity}.
\end{proof}

In the following we give some lower and upper bounds for the number of edges of the graph $\Gamma_{\rm S}(G)$.

\begin{proposition}\label{|E|>|G|+1}
Let $G$ be an insoluble finite group with $R(G)=1$. Then $|E(\Gamma_{\rm S}(G))|> |G|+1$.  
\end{proposition}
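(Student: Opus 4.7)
The plan is to deduce the stronger bound directly from Proposition \ref{|E|>} by ruling out its equality case. That proposition says
\[
|E(\Gamma_{\rm S}(G))|\ge \frac{|G|}{2}\bigl(k(G)-3\bigr)+1,
\]
with equality iff $G$ is abelian. Since an insoluble group is automatically non-abelian, the inequality is strict for our $G$:
\[
|E(\Gamma_{\rm S}(G))| > \frac{|G|}{2}\bigl(k(G)-3\bigr)+1.
\]

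To finish, I would invoke the classical fact that every finite insoluble group has at least five conjugacy classes. This is an elementary consequence of the classification of groups with few conjugacy classes: the complete list of groups with $k(G)\le 4$ consists of $\{1,\mathbb{Z}_2,\mathbb{Z}_3,S_3,\mathbb{Z}_4,V_4,D_{10},A_4\}$, all of which are soluble. Hence $k(G)\ge 5$, so $\frac{|G|}{2}(k(G)-3)\ge |G|$, and plugging this into the previous display yields $|E(\Gamma_{\rm S}(G))|>|G|+1$, as required.

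No real obstacle arises: the argument is essentially a one-line strengthening of Proposition \ref{|E|>}. The single delicate point is ensuring the strict form of Proposition \ref{|E|>} is legitimately available, which is immediate from the implication insoluble $\Rightarrow$ non-abelian. The bound is essentially sharp at $G=A_5$, where $|G|=60$ and $k(G)=5$, so that $\frac{|G|}{2}(k(G)-3)+1=|G|+1$ and the strict inequality coming from non-abelianness produces exactly the gap asserted in the proposition.
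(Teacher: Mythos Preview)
Your proof is correct and shares the same skeleton as the paper's: both invoke Proposition~\ref{|E|>} together with the fact that an insoluble group has $k(G)\ge 5$. The difference lies in how the strict inequality is obtained. You use the equality clause of Proposition~\ref{|E|>} directly: since $G$ is insoluble it is non-abelian, so $|E(\Gamma_{\rm S}(G))| > \tfrac{|G|}{2}(k(G)-3)+1 \ge |G|+1$. The paper instead first derives the weak bound $|E(\Gamma_{\rm S}(G))|\ge |G|+1$, then assumes equality, deduces $k(G)=5$, appeals to Burnside's classification to conclude $G\cong A_5$, and finally computes $|E(\Gamma_{\rm S}(A_5))|=571$ to reach a contradiction. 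Your route is shorter and avoids both the Burnside reference and the explicit $A_5$ computation; the paper's route, while more circuitous, has the incidental benefit of exhibiting the concrete edge count for $A_5$.
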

\begin{proof}
Since $G$ is insoluble, we have $k(G) \geq 5$. Thus, $|E(\Gamma_{\rm S}(G))| \geq |G| + 1$ by Proposition \ref{|E|>}. If $|E(\Gamma_{\rm S}(G))| = |G| + 1$, then again by Proposition \ref{|E|>}, we would have $k(G) = 5$. According to \cite[Note A]{Burnside}, this would imply $G \cong A_5$, which is impossible because $|E(\Gamma_{\rm S}(A_5))| = 571$ by using Proposition \ref{No Edges}.
\end{proof}

\begin{proposition}
Let $G$ be a finite insoluble simple group. Then $|E(\Gamma_{\rm S}(G))|>4|G|+1$.
\end{proposition}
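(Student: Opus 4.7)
The plan is to combine the lower bound from Proposition~\ref{|E|>} with a short case analysis on the number of conjugacy classes $k(G)$. Since $G$ is simple and insoluble, it is non-abelian and $R(G)=1$, so Proposition~\ref{|E|>} applies; moreover, the equality case of that proposition requires $G$ to be abelian, which is impossible here, so I obtain the strict inequality
\[
|E(\Gamma_{\rm S}(G))| > \frac{|G|}{2}\bigl(k(G)-3\bigr)+1.
\]

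The main step is a dichotomy on $k(G)$. If $k(G)\ge 11$, then the right-hand side above is at least $4|G|+1$ and the desired inequality follows at once. Otherwise $k(G)\le 10$, and the classification of finite non-abelian simple groups with few conjugacy classes reduces us to an explicit short list of small simple groups, among them $A_5$, $\PSL(2,7)$, $A_6$, $\PSL(2,11)$, $\PSL(2,8)$, $\PSL(2,13)$ and $A_7$. For each of these I would compute $|E(\Gamma_{\rm S}(G))|$ directly from Proposition~\ref{No Edges}(i) (equivalently, from the exact value of $P_s(G)$) with GAP, and verify $|E(\Gamma_{\rm S}(G))| > 4|G|+1$ in every case; for $A_5$ this is already recorded in the proof of Proposition~\ref{|E|>|G|+1}, where $|E(\Gamma_{\rm S}(A_5))|=571>241=4\cdot 60+1$.

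The main obstacle is exactly this small-$k(G)$ case: the estimate from Proposition~\ref{|E|>} degrades linearly in $k(G)$ and fails to reach $4|G|+1$ once $k(G)<11$, and I do not see a uniform structural refinement of the soluble-radical bound $|\mathsf{Sol}_G(x)|\ge 10$ that covers all of the remaining small simple groups at once. Consequently the argument is forced to terminate with an explicit finite GAP verification on the above list, very much in the style of the $A_5$ step at the end of Proposition~\ref{|E|>|G|+1}.
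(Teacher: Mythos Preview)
Your approach is essentially the same as the paper's: reduce via Proposition~\ref{|E|>} to a bound on $k(G)$, invoke the classification of non-abelian simple groups with few conjugacy classes (the paper cites \cite{Lopez}), and finish with a GAP check on the resulting finite list. The only cosmetic difference is that by exploiting the strict inequality in Proposition~\ref{|E|>} you dispose of $k(G)\ge 11$ directly and are left with $k(G)\le 10$, whereas the paper argues by contradiction to obtain $k(G)\le 11$ and so its explicit GAP list also contains $\PSL(2,17)$, $\PSL(3,4)$, $M_{11}$, and $\Sz(8)$.
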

\begin{proof}
Assume that $|E(\Gamma_{\rm S}(G))| \leq 4|G| + 1$. Then, by Proposition \ref{|E|>}, we have $k(G) \leq 11$. The groups with such a property have been classified (see \cite[Tables 1-3]{Lopez}). Since $G$ is simple, it follows that $G$ is isomorphic to one of the following groups:

$$
A_{6},A_{7},\PSL(2, q)~(q=7,11,13,17), \PSL(3,4),M_{11},\Sz(8).
$$
Using GAP \cite{GAP}, we can check that for any of the above groups, $|E(\Gamma_{\rm S}(G))| > 4|G| + 1$, which leads to a contradiction.
\end{proof}
Applying the upper bound $\frac{11}{30}$ for the solubility degree and Proposition \ref{No Edges}, we immediately obtain the following corollary.\begin{corollary}\label{|E|<}
Let $G$ be a finite insoluble group with $R(G)=1$. Then   
$$|E(\Gamma_{\rm S}(G))|\leq \frac{11}{60}|G|^2-\frac{3}{2}|G|+1.$$
\end{corollary}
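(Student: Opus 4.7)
The plan is to combine the exact formula for $2|E(\Gamma_{\rm S}(G))|$ from Proposition \ref{No Edges}(i) with the Guralnick--Wilson upper bound $P_{s}(G)\leq \tfrac{11}{30}$ that is already invoked in this section. Since the hypothesis is $R(G)=1$, the formula simplifies dramatically, so there is essentially nothing to estimate beyond a single substitution.

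Concretely, I would first set $|R(G)|=1$ in the identity
$$2|E(\Gamma_{\rm S}(G))|=|G|^2P_s(G)+|R(G)|^2+|R(G)|-|G|(2|R(G)|+1),$$
which collapses the constant and linear terms to
$$2|E(\Gamma_{\rm S}(G))|=|G|^2 P_s(G)+2-3|G|.$$
Next I would replace $P_s(G)$ by its maximum possible value $\tfrac{11}{30}$, yielding
$$2|E(\Gamma_{\rm S}(G))|\leq \tfrac{11}{30}|G|^2+2-3|G|,$$
and then divide by $2$ to arrive at the claimed bound $|E(\Gamma_{\rm S}(G))|\leq \tfrac{11}{60}|G|^2-\tfrac{3}{2}|G|+1$.

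There is no genuine obstacle here, since the bound $P_s(G)\leq \tfrac{11}{30}$ is quoted directly from \cite{Guralnick2} and Proposition \ref{No Edges}(i) is already proved; the corollary is purely a matter of substitution and arithmetic. The only care needed is to keep track of signs when simplifying $|R(G)|^2+|R(G)|-|G|(2|R(G)|+1)$ at $|R(G)|=1$, which gives $2-3|G|$ (and not, say, $-3|G|$), so that the additive constant $+1$ on the right-hand side comes out correctly after dividing by $2$.
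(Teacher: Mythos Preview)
Your proposal is correct and is exactly the approach the paper takes: it states the corollary as an immediate consequence of Proposition~\ref{No Edges}(i) combined with the Guralnick--Wilson bound $P_s(G)\le \tfrac{11}{30}$, and your substitution and arithmetic match.
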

By Proposition \ref{No Edges}, we see that if $G=A_{5}$, then $|E(\Gamma_{\rm S}(G))|=571$. Therefore the upper bound in the previous corollary is sharp.

\section{\bf Groups with isomorphic solubility graphs}
Let $G$ and $H$ be two finite insoluble groups. A graph isomorphism between the solubility graphs $\Gamma_{\rm S}(G)$ and $\Gamma_{\rm S}(H)$ is a one-to-one correspondence $\phi: G \setminus R(G) \rightarrow H \setminus R(H)$ such that $\phi$ preserves edges. In other words, if $x, y \in G \setminus R(G)$ and $\langle x, y \rangle$ is soluble, then $\langle \phi(x), \phi(y) \rangle$ is soluble as well. We denote the isomorphism by $\Gamma_{\rm S}(G) \cong \Gamma_{\rm S}(H)$.

It is worth noting that if $G \cong H$, then $\Gamma_{\rm S}(G) \cong \Gamma_{\rm S}(H)$. However, there are cases where groups are not isomorphic, yet their solubility graphs are isomorphic. For example, we have $\SL(2,5) \not\cong \mathbb{Z}2 \times A_5$, but by applying GAP \cite{GAP},
 we can observe that $\Gamma_{\rm S}(\SL(2,5)) \cong \Gamma_{\rm S}(\mathbb{Z}_2 \times A_5)$.
\begin{proposition}\label{iso1}
There is no finite insoluble group $G$ with an insoluble proper subgroup $H$ such that $\Gamma_{\rm S}(G)\cong\Gamma_{\rm S}(H)$.
\end{proposition}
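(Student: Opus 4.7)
The plan is to exploit just the equality of vertex counts forced by the graph isomorphism; remarkably the edge-preservation property is not needed. The isomorphism gives
\[
|H\setminus R(H)|=|G\setminus R(G)|,
\]
and I will combine this with a set-theoretic containment to get a sharp structural restriction on $H$ inside $G$.

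The first step is to establish $H\setminus R(H)\subseteq G\setminus R(G)$. For this I note that $H\cap R(G)$ is a normal soluble subgroup of $H$ (normal because $R(G)\nor G$, soluble because $R(G)$ is), hence $H\cap R(G)\sub R(H)$. Consequently any $v\in H$ with $v\in R(G)$ must lie in $R(H)$; contrapositively, if $v\in H\setminus R(H)$ then $v\notin R(G)$, which gives the claimed inclusion.

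Combined with the cardinality equality above, the inclusion must actually be an equality: $H\setminus R(H)=G\setminus R(G)$. Reading this off on both sides (and using $R(H)\sub H$) yields the two identities
\[
G=H\cup R(G),\qquad R(H)=H\cap R(G).
\]

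The final step applies the classical lemma that if a union of two subgroups is itself a subgroup (here the whole group $G$) then one of them must contain the other. If $H\sub R(G)$, then $H$ is soluble, contradicting insolubility of $H$. If instead $R(G)\sub H$, then $G=H\cup R(G)=H$, contradicting that $H$ is a proper subgroup. Either way we reach a contradiction, completing the proof. The main conceptual point, and arguably the only nonobvious step, is recognising that $H\cap R(G)\sub R(H)$ upgrades the trivial containment of vertex sets, because the subsequent structural consequences are then immediate.
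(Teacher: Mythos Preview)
Your proof is correct, but it follows a genuinely different route from the paper's. The paper uses only the cardinality equality $|G|-|R(G)|=|H|-|R(H)|$ together with the crude Lagrange bounds $|R(G)|\leq |G|/2$ (since $G$ is insoluble) and $|H|\leq |G|/2$ (since $H$ is proper), which immediately give the impossible chain $|G|/2\leq |H|-|R(H)|\leq |G|/2-|R(H)|$. Your argument instead upgrades the vertex-count equality to a set equality $H\setminus R(H)=G\setminus R(G)$ via the observation $H\cap R(G)\leq R(H)$, and then finishes with the classical ``no group is a union of two proper subgroups'' lemma. The paper's approach is shorter and entirely arithmetic; yours is more structural and makes the obstruction explicit (it forces $G=H\cup R(G)$). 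Incidentally, the second identity $R(H)=H\cap R(G)$ that you derive is not actually needed for the concluding step.
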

\begin{proof}
Contrarily, let us assume that there exists a finite insoluble group $G$ with an insoluble proper subgroup $H$ such that $\Gamma_{\rm S}(G) \cong \Gamma_{\rm S}(H)$. This implies that the vertex sets of $\Gamma_{\rm S}(H)$ and $\Gamma_{\rm S}(G)$ coincide. Therefore, we have
\begin{eqnarray}
|G| - |R(G)| = |H| - |R(H)|.
\end{eqnarray}
Since $G$ is insoluble, $R(G)$ is a proper subgroup of $G$, which implies that $[G:R(G)] \geq 2$. Consequently, we have $|R(G)| \leq \frac{|G|}{2}$. Moreover, since $H$ is a proper subgroup of $G$, we have $[G:H] \geq 2$. Now, Equation (4.1) implies
$$
\frac{|G|}{2}\leq|H|-|R(H)|\leq \frac{|G|}{2}-|R(H)|,
$$
 which is impossible. 
\end{proof}
\begin{proposition}
There is no finite insoluble group $G$ with a non-trivial normal subgroup $N$ such that the quotient group $G/N$ is insoluble and $\Gamma_{\rm S}(G) \cong \Gamma_{\rm S}(G/N)$.
\end{proposition}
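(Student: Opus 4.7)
The plan is to mimic the strategy of Proposition \ref{iso1}: assume the claimed isomorphism exists, extract the equality of vertex-set cardinalities, and derive a quantitative contradiction from the size of the soluble radical. First I would suppose, for contradiction, that $G$ is a finite insoluble group with a non-trivial normal subgroup $N$ such that $G/N$ is insoluble and $\Gamma_{\rm S}(G)\cong\Gamma_{\rm S}(G/N)$. Since a graph isomorphism preserves the vertex count, this yields
\[
|G|-|R(G)| \;=\; \frac{|G|}{|N|}-|R(G/N)|,
\]
which I would rearrange as
\[
|G|\cdot\frac{|N|-1}{|N|} \;=\; |R(G)|-|R(G/N)|.
\]

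Next I would bound both sides. Since $N$ is non-trivial, $|N|\ge 2$, so the left-hand side is at least $|G|/2$. For the right-hand side, the trivial bound $|R(G/N)|\ge 1$ gives $|R(G)|-|R(G/N)|\le |R(G)|$. The only non-trivial ingredient is then the standard estimate $|R(G)|\le |G|/60$: indeed $G/R(G)$ must be insoluble (otherwise $G$ itself would be soluble, contradicting the hypothesis), and the smallest finite insoluble group is $A_{5}$, of order $60$, so $|G/R(G)|\ge 60$. Combining the two bounds produces $|G|/2 \le |G|/60$, a contradiction.

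There is essentially no real obstacle in this plan; the hypothesis that $G/N$ be insoluble is needed only to ensure that $\Gamma_{\rm S}(G/N)$ is a well-defined (non-empty) solubility graph, while the inequality $|R(G)|\le |G|/60$ is what drives the contradiction. As a sanity check, in the special case where $N$ is soluble one has $N\subseteq R(G)$ and $R(G/N)=R(G)/N$, and the vertex-count identity collapses immediately to $(|N|-1)(|G|-|R(G)|)=0$, which is already absurd; the argument above handles the remaining case (insoluble $N$) uniformly, without needing to split into cases.
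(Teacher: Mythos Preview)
Your proof is correct and follows essentially the same route as the paper's: equate vertex counts, then squeeze a contradiction out of size bounds on $|G/N|$ and $|R(G)|$. The only difference is cosmetic: you invoke the sharper estimate $|R(G)|\le |G|/60$ (via $|G/R(G)|\ge |A_5|$), whereas the paper gets by with the cruder $|R(G)|\le |G|/2$, which already yields $|G|/2 \le |G/N|-|R(G/N)| \le |G|/2 - |R(G/N)|$ and hence the contradiction $|R(G/N)|\le 0$.
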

\begin{proof}
Contrarily, let's assume that there exists a finite insoluble group $G$ with a non-trivial normal subgroup $N$ such that the quotient group $G/N$ is insoluble and $\Gamma_{\rm S}(G) \cong \Gamma_{\rm S}(G/N)$. This implies that the vertex sets of $\Gamma_{\rm S}(G/N)$ and $\Gamma_{\rm S}(G)$ correspond to each other. Therefore, we have
\begin{eqnarray}
|G| - |R(G)| = |G/N| - |R(G/N)|.
\end{eqnarray}
Since $N$ is a non-trivial subgroup of $G$, we have $|G/N| \leq \frac{|G|}{2}$. Similar to the previous proposition, we also have $|R(G)| \leq \frac{|G|}{2}$. Now, using Equation (4.2), we obtain
$$
\frac{|G|}{2}\leq |G/N|-|R(G/N)|\leq \frac{|G|}{2}-|R(G/N)|,
$$
which leads to a contradiction. Thus, the proof is complete.
\end{proof}
\begin{proposition}\label{|G|=|H|}
If $G$ and $H$ are two finite insoluble simple groups such that $\Gamma_{\rm S}(G) \cong \Gamma_{\rm S}(H)$ then $|G|=|H|$.
\end{proposition}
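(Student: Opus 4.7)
The plan is to observe that this proposition follows almost immediately from the definition of the vertex set of $\Gamma_{\rm S}(G)$ together with the fact that for a non-abelian simple group the soluble radical is trivial. The key point is that a graph isomorphism preserves the number of vertices, so the computation reduces to identifying $|R(G)|$ and $|R(H)|$ in the simple insoluble setting.

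First I would note that since $G$ is insoluble and simple, the only normal subgroups of $G$ are $1$ and $G$ itself; as $G$ is not soluble, the maximal normal soluble subgroup $R(G)$ must equal $1$. The same reasoning gives $R(H)=1$. Thus the vertex set of $\Gamma_{\rm S}(G)$ is $G \setminus \{1\}$, of size $|G|-1$, and similarly $\Gamma_{\rm S}(H)$ has $|H|-1$ vertices.

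Next, since any graph isomorphism $\phi\colon \Gamma_{\rm S}(G) \to \Gamma_{\rm S}(H)$ is in particular a bijection of vertex sets, we obtain
\[
|G|-1 \;=\; |G|-|R(G)| \;=\; |H|-|R(H)| \;=\; |H|-1,
\]
from which $|G|=|H|$ follows immediately.

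There is essentially no obstacle here; the entire content of the proof is the observation that the soluble radical of a non-abelian simple group is trivial, so that the vertex counts of the two solubility graphs coincide with $|G|-1$ and $|H|-1$ respectively. This proposition serves as a setup for subsequent results (where one presumably hopes to upgrade coincidence of orders to an actual group isomorphism, at least under further hypotheses), but the statement itself is a direct cardinality argument.
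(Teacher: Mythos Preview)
Your proof is correct and follows the same approach as the paper: equate the vertex counts $|G|-|R(G)|=|H|-|R(H)|$ from the graph isomorphism, and use that the soluble radical of an insoluble simple group is trivial to conclude $|G|=|H|$. If anything, you are slightly more explicit than the paper in justifying why $R(G)=R(H)=1$.
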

\begin{proof}
Since $\Gamma_{\rm S}(G) \cong \Gamma_{\rm S}(H)$, so the vertex set of $\Gamma_{\rm S}(G)$  corresponds to the vertex set of $\Gamma_{\rm S}(H)$. Therefore, we have $|G|-|R(G)|=|H|-|R(H)|$. Consequently, $|G|=|H|$.
\end{proof}
\begin{theorem}
Let $G$ be finite insoluble group. Assume that $n\geq 5$ is a natural number such that $p=\frac{n!}{2}-1$ is a prime number. If $\Gamma_{\rm S}(G) \cong \Gamma_{\rm S}(A_n)$ then $|G|=|A_n|$.
\end{theorem}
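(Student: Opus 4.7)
The plan is to reduce the problem to a purely arithmetic statement about $|R(G)|$ and then rule out the unwanted case via Sylow theory. First, I would exploit the fact that an isomorphism $\Gamma_{\rm S}(G)\cong\Gamma_{\rm S}(A_n)$ is a bijection between the vertex sets, which gives
\[
|G|-|R(G)|=|A_n|-|R(A_n)|.
\]
Since $n\geq 5$, the group $A_n$ is simple, so $R(A_n)=1$ and the right-hand side equals $\frac{n!}{2}-1=p$. So the problem collapses to showing $|R(G)|=1$, which will then yield $|G|=p+1=|A_n|$.

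Next, I would use the fact that $R(G)\leq G$, so $|R(G)|$ divides $|G|$, hence $|R(G)|$ divides $|G|-|R(G)|=p$. As $p$ is prime, this forces $|R(G)|\in\{1,p\}$. If $|R(G)|=1$ we are done, so the whole argument reduces to eliminating the possibility $|R(G)|=p$.

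The main (and only nontrivial) step is the elimination of $|R(G)|=p$. In that case $|G|=2p$, and I would invoke the standard Sylow argument: since $n\geq 5$ the prime $p=\frac{n!}{2}-1$ satisfies $p>2$, so the number of Sylow $p$-subgroups divides $2$ and is congruent to $1\pmod p$, forcing it to be $1$. Hence $G$ has a normal subgroup of order $p$ with cyclic quotient of order $2$, so $G$ is metabelian and in particular soluble. This contradicts the hypothesis that $G$ is insoluble, finishing the proof.

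I do not expect a genuine obstacle here; the whole argument is essentially a one-line divisibility observation followed by a standard classification of groups of order $2p$. The only point requiring care is to remember that $R(G)$ is a bona fide subgroup, so Lagrange's theorem applies and the divisibility $|R(G)|\mid p$ really does cut the possibilities down to two.
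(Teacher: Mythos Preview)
Your proposal is correct and follows essentially the same route as the paper: equate vertex-set sizes to get $|G|-|R(G)|=p$, use $|R(G)|\mid |G|$ to reduce to $|R(G)|\in\{1,p\}$, and rule out $|R(G)|=p$ because $|G|=2p$ would force $G$ to be soluble. The only difference is that the paper dismisses the $|G|=2p$ case in one phrase (``which is impossible''), whereas you spell out the standard Sylow argument showing every group of order $2p$ is metabelian; this is an elaboration, not a different approach.
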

\begin{proof}
Suppose $\Gamma_{\rm S}(G) \cong \Gamma_{\rm S}(A_n)$. Then the vertex set of $\Gamma_{\rm S}(G)$ corresponds to the vertex set of $\Gamma_{\rm S}(A_n)$. Therefore, we have $|G|-|R(G)|=p$. However, since $|R(G)|$ divides $|G|$, we find that $|R(G)|~\mid ~p$, implying $|R(G)|=1$ or $p$. If $|R(G)|=p$, then $|G|=2p$, which is impossible. Hence, we conclude that $|R(G)|=1$, and thus $|G|=|A_n|$.
\end{proof}
\begin{corollary}\label{G, A_{5}}
Let $G$ be a finite insoluble group such that  $\Gamma_{\rm S}(G) \cong \Gamma_{\rm S}(A_5)$. Then $G\cong A_{5}$.
\end{corollary}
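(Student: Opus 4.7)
The plan is to apply the preceding theorem with $n=5$ and then use classical facts about groups of order $60$ to identify $G$. First I would verify that $n=5$ satisfies the hypothesis of the theorem: compute $p=\frac{5!}{2}-1=59$, which is indeed prime. Since $\Gamma_{\rm S}(G)\cong \Gamma_{\rm S}(A_5)$ by assumption, the theorem then gives $|G|=|A_5|=60$.

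It remains to show that any insoluble group of order $60$ is isomorphic to $A_5$. My approach is to first deduce that such a $G$ must be simple. The key fact I would invoke is that $60$ is the smallest order of a non-soluble finite group; thus every group of order strictly less than $60$ is soluble. If $N\nor G$ with $1<|N|<60$, then $|N|<60$ and $|G/N|<60$, so both $N$ and $G/N$ are soluble, which would force $G$ itself to be soluble, a contradiction. Hence $G$ has no proper non-trivial normal subgroup, i.e., $G$ is simple.

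Finally, I would appeal to the classical result (often attributed to Galois) that the unique simple group of order $60$ up to isomorphism is $A_5$, concluding $G\cong A_5$. There is no substantive obstacle in this argument; the proof is essentially a one-line application of the preceding theorem combined with well-known facts about groups of order $60$, and could alternatively be phrased as the $n=5$ instance of the theorem together with a citation to the classification of groups of order $60$.
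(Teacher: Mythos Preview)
Your proposal is correct and follows exactly the route the paper intends: the corollary is stated without proof as an immediate consequence of the preceding theorem with $n=5$ (noting $59$ is prime) together with the classical fact that the only insoluble group of order $60$ is $A_5$. Your argument filling in that last step via simplicity is standard and complete.
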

\begin{corollary}
Let $G$ be a finite insoluble group such that  $\Gamma_{\rm S}(G) \cong \Gamma_{\rm S}(A_6)$. Then $|G|=|A_6|$ and $G$ is isomorphic with one of the following groups:
$$
A_6,~C_{3}\rtimes S_{5},~C_{3}\times S_{5},~S_{3}\times A_{5},~C_{6}\times A_{5},~\text{or}~C_{3}\times \SL(2, 5).
$$
Furthermore, if $G$ is quasisimple or almost simple group, then $G\cong A_{6}$.
\end{corollary}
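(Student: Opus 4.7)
The plan is to apply the preceding Theorem with $n=6$. Since $p=\frac{6!}{2}-1=359$ is prime, the hypothesis of that Theorem is met, and it yields immediately that $|G|=|A_6|=360$.

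The heart of the argument is then to enumerate the finite insoluble groups of order $360$, since $G$ must appear in such a list. The only non-abelian simple groups whose order divides $360$ are $A_5$ and $A_6$, so either $A_6$ occurs as a composition factor of $G$ -- in which case $G\cong A_6$ on order grounds -- or else $A_5$ is the unique non-abelian composition factor, and the remaining composition factors are cyclic of prime order with orders multiplying to $6$. In the latter case I would locate a subnormal copy of $A_5$, pass to its normal closure, and then classify the resulting extensions, using $\mathrm{Out}(A_5)=C_2$ to control the possible outer actions and the Schur multiplier $C_2$ of $A_5$ for the non-split central cases. Running through split/non-split extensions and trivial/sign actions on the residual cyclic factors produces the five remaining groups $C_6\times A_5$, $S_3\times A_5$, $C_3\times S_5$, $C_3\rtimes S_5$, and $C_3\times \SL(2,5)$.

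The main obstacle is the extension bookkeeping: one must distinguish $C_3\times S_5$ from $C_3\rtimes S_5$ (the semidirect product being the one in which $S_5$ acts on $C_3$ through its sign map) and recognise $C_3\times \SL(2,5)$ as the sole perfect-central-extension case, arising from the binary icosahedral double cover of $A_5$. Pairwise non-isomorphism of the six candidates is then verified by comparing centres, derived subgroups, and soluble radicals; as a safety net, the whole enumeration can be cross-checked in GAP.

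For the additional assertion, one inspects the list: $A_6$ is the unique simple group in it, so it is at once the unique almost simple member (each of the other five has a non-trivial soluble normal subgroup and therefore a non-simple socle) and the unique quasisimple member (each of the other five fails to be perfect, the derived subgroup omitting the abelian direct or central factor in every case).
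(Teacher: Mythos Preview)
Your proposal is correct and follows exactly the route the paper intends: the corollary is stated without proof in the paper, the implicit argument being precisely ``apply the preceding theorem with $n=6$ (using that $359$ is prime) to get $|G|=360$, then invoke the classification of insoluble groups of order $360$'', which is what you carry out in more detail. Your treatment of the quasisimple/almost simple clause by inspection of the list is also the natural one.
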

\begin{proposition}
	Let $G$ be a  finite insoluble group such that $\Gamma_{\rm S}(G) \cong \Gamma_{\rm S}(A_7)$. Then
	$$|G|=|A_{7}|$$
\end{proposition}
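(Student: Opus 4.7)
The plan is to imitate the proof of the preceding theorem, but handle the arithmetic obstruction that $|A_7|/2-1 = 2519$ is not prime. Since $\Gamma_{\rm S}(G)\cong\Gamma_{\rm S}(A_7)$ forces equality of vertex counts, the first step is to deduce
$$|G|-|R(G)|=|A_7|-|R(A_7)|=2520-1=2519.$$
Because $|R(G)|$ divides $|G|=|R(G)|+2519$, it follows that $|R(G)|$ divides $2519$. The next arithmetic step is to factor $2519=11\cdot 229$ (one checks that $229$ is prime by trial division up to $\sqrt{229}<16$), which restricts $|R(G)|$ to one of the four values $\{1,\,11,\,229,\,2519\}$.

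Second, I would recall that $G/R(G)$ must be insoluble: otherwise $G$ would be a soluble-by-soluble extension, hence soluble, contradicting the hypothesis. In particular, $G/R(G)$ must admit a non-abelian composition factor, and the smallest non-abelian simple group $A_5$ has order $60$. So $|G/R(G)|$ must be divisible by the order of some non-abelian simple group, in particular $|G/R(G)|\geq 60$.

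Third, I would rule out the three non-trivial candidates case by case. If $|R(G)|=2519$, then $|G/R(G)|=2$ is soluble; if $|R(G)|=229$, then $|G/R(G)|=12$ is soluble. The only delicate case is $|R(G)|=11$, which gives $|G/R(G)|=230 = 2\cdot 5\cdot 23$. Here one checks that no non-abelian simple group has order dividing $230$ (since $60\nmid 230$ and every non-abelian simple group has order at least $60$), so $G/R(G)$ would again be soluble. All three cases contradict the insolubility of $G/R(G)$, leaving only $|R(G)|=1$ and therefore $|G|=2520=|A_7|$.

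The main obstacle, compared with the preceding theorem, is precisely the non-primality of $2519$: one must eliminate the extra divisors $11$ and $229$. This is manageable because the resulting quotient orders ($230$, $12$) are too small to support a non-abelian composition factor, so the case analysis remains short and elementary; no deeper classification is required.
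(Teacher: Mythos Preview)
Your argument is correct and essentially identical to the paper's: from the equality of vertex sets deduce $|G|-|R(G)|=2519=11\cdot 229$, list the four divisors as possible values of $|R(G)|$, and eliminate the nontrivial ones because the resulting quotient orders $2,\,12,\,230$ force $G/R(G)$ to be soluble. One minor quibble: the parenthetical ``since $60\nmid 230$'' does not by itself rule out non-abelian simple composition factors (not every non-abelian simple group has order divisible by $60$, e.g.\ $|\PSL(2,7)|=168$); it is cleaner simply to observe that $230=2\cdot 5\cdot 23$ is squarefree, so any group of that order is soluble.
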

\begin{proof}
Suppose $\Gamma_{\rm S}(G) \cong \Gamma_{\rm S}(A_7)$. Then the vertex set of $\Gamma_{\rm S}(G)$ corresponds to the vertex set of $\Gamma_{\rm S}(A_7)$. Therefore, we have $|G|-|R(G)|=2519$. However, note that $|R(G)|$ must divide $|G|$. Thus, we have $|R(G)|\mid 2519$. This implies that $|R(G)|$ can only be equal to $1$, $11$, $229$, or $11 \cdot 229$. Therefore, we have four possible values for $\frac{|G|}{|R(G)|}$: $2520$, $23\cdot 5\cdot 2$, $3\cdot 2^2$, or $2$. Given that $G$ is insoluble, we can conclude that
 $|G|=|A_7|=2520$. Therefore, the result holds.
\end{proof}

\begin{remark}
Similar to the proof of the previous theorem and using GAP, it has been observed that if $G$ is a finite insoluble group such that $\Gamma_{\rm S}(G) \cong \Gamma_{\rm S}(A_n)$ ($n\leq 12$), then $|G|=|A_n|$ \end{remark}
Now the following question arises.
\begin{question}
Let $G$ be a finite insoluble group such that $\Gamma_{\rm S}(G) \cong \Gamma_{\rm S}(A_n)$. Is it true that $|G|=|A_n|$?
\end{question}
We can pose the following question in general:
\begin{question}
Let $H$ be a finite insoluble simple group. Let $G$ be a finite insoluble group such that $\Gamma_{\rm S}(G) \cong \Gamma_{\rm S}(H)$. Is it true that $|G|=|H|$?
\end{question}
 By using Artin Theorem (\cite{Artin 1, Artin 2}) and the classification of finite simple groups , we have the following theorem.
\begin{theorem}\cite[Lemma 2.3]{Derafsheh 1}\label{Artin}
Let $G$ and $H$ be finite simple groups, $|G|=|H|$, then the following holds:
\begin{itemize}
\item[(a)]
If $|G|=|A_8|=|\PSL(3,4)|$, then $G\cong A_8$ or $G\cong \PSL(3,4)$;
\item[(b)]
If $|G|=|B_n(q)|=|C_n(q)|$, where $n\geq 3$ and $q$ is odd, then $G\cong B_n(q)$ or $G\cong C_n(q)$;
\item[(c)]
If $H$ is not in the above cases, then $G\cong H$.
\end{itemize}
\end{theorem}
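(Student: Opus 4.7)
The plan is to deduce the statement from Artin's classical theorem on order coincidences between finite simple groups, upgraded via the classification of finite simple groups (CFSG) to cover sporadic groups and twisted Lie type groups that Artin did not originally treat. Since $G$ and $H$ are finite simple groups of the same order, CFSG gives us a finite list of families to which each can belong: cyclic of prime order, alternating $A_n$ with $n\ge 5$, Lie type (classical or exceptional, untwisted or twisted), or one of the 26 sporadic groups. The strategy is a case analysis based on which family each group falls into, reducing to the observation that within each family the order almost always determines the isomorphism type, while cross-family coincidences are rare and known.

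First I would handle the easy families. Cyclic groups of prime order are clearly determined by their order. For the sporadic groups, a direct inspection (for instance via the ATLAS) shows that all 26 orders are pairwise distinct and that no sporadic group shares its order with any alternating group or group of Lie type; this rules out any exceptional pairs involving a sporadic group. For alternating groups, $|A_n|=n!/2$ is strictly increasing in $n$, so $A_n \cong A_m$ whenever they have the same order; the only coincidence with a Lie type group is the famous $|A_8|=|\mathrm{PSL}(3,4)|=20160$, which produces case (a).

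Next I would attack the Lie type families, which is the substantive part of the argument. Using the standard order formulas one shows that within a single family the parameters $(n,q)$ can be recovered from the order (for example by reading off the defining characteristic as the unique prime whose contribution is a pure power, and then solving for $n$ from the degree polynomial). The only systematic within-family failure of this recovery is the identity
\[
|B_n(q)|=|C_n(q)|=q^{n^2}\prod_{i=1}^{n}\bigl(q^{2i}-1\bigr),
\]
and the groups $B_n(q)$ and $C_n(q)$ are known to be non-isomorphic precisely when $n\ge 3$ and $q$ is odd; this produces case (b). Between different Lie families one must then rule out coincidences by comparing order polynomials in $q$ and checking divisibility by the relevant cyclotomic values; the outcome, already tabulated by Artin and confirmed post-CFSG, is that no further coincidences occur.

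The main obstacle, and the step where I would lean most heavily on the literature rather than compute, is exactly this cross-family Lie type check: one has to show that no untwisted family coincides in order with any other (twisted or untwisted) family, except in the $B_n$ versus $C_n$ situation. This reduces to comparing products of cyclotomic polynomials $\Phi_d(q)$ and their multiplicities, and it is here that Artin's original arguments together with the CFSG-based refinements in \cite{Derafsheh 1} do the real work. Once these comparisons are in hand, the three cases (a), (b), (c) exhaust every possibility and the theorem follows.
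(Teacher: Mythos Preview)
Your sketch is reasonable, but note that the paper does not actually give a proof of this theorem: it is quoted verbatim as \cite[Lemma 2.3]{Derafsheh 1}, with the remark that it follows from Artin's theorems \cite{Artin 1, Artin 2} together with the classification of finite simple groups. So there is no ``paper's own proof'' to compare against; your outline simply expands on the same attribution the authors give, and in that sense is consistent with what they cite.
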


\begin{corollary}\label{cong}
Let $G$ and $H$ be finite simple groups such that $\Gamma_{\rm S}(G) \cong \Gamma_{\rm S}(H)$. The following statements hold:
\begin{itemize}
\item[(a)]
If $H\in \{ A_8, \PSL(3,4) \}$, then $G\cong A_8$ or $G\cong \PSL(3,4)$.
\item[(b)]
If $H\in \{B_n(q), C_n(q) \}$, where $n\geq 3$ and $q$ is odd, then $G\cong B_n(q)$ or $G\cong C_n(q)$.
\item[(c)]
If $H$ does not fall into the above cases, then $G\cong H$.
\end{itemize}
\end{corollary}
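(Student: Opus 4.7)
The plan is to reduce Corollary \ref{cong} immediately to the combination of Proposition \ref{|G|=|H|} and Theorem \ref{Artin}. First, note that the solubility graph $\Gamma_{\rm S}(\cdot)$ is defined only for insoluble groups, so the hypothesis $\Gamma_{\rm S}(G)\cong\Gamma_{\rm S}(H)$ forces both $G$ and $H$ to be non-abelian simple groups, and in particular $R(G)=R(H)=1$. A graph isomorphism equates the number of vertices, so $|G|-1=|H|-1$, which is exactly the conclusion of Proposition \ref{|G|=|H|}. Thus the whole content of the hypothesis, for the purposes of this corollary, is the order equality $|G|=|H|$.

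Once this order equality is in hand, I would simply feed it into Theorem \ref{Artin}. In case $(a)$ of the corollary, $|H|=|A_8|=|\PSL(3,4)|$, so $|G|$ takes this common value and the dichotomy of Theorem \ref{Artin}(a) yields $G\cong A_8$ or $G\cong\PSL(3,4)$. In case $(b)$, $|H|=|B_n(q)|=|C_n(q)|$ with $n\ge 3$ and $q$ odd, and Theorem \ref{Artin}(b) gives $G\cong B_n(q)$ or $G\cong C_n(q)$. In case $(c)$, $H$ avoids the coincidences in orders listed in $(a)$ and $(b)$, so Theorem \ref{Artin}(c) gives $G\cong H$ outright.

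There is no real obstacle here: both ingredients are stated earlier in the paper. The only subtlety, which is entirely bookkeeping, is to match the three cases of Theorem \ref{Artin} (phrased in terms of the order of $G$) to the three cases of the corollary (phrased in terms of the isomorphism type of $H$); this is immediate once one observes that the families $\{A_8,\PSL(3,4)\}$, $\{B_n(q),C_n(q)\}$ with $n\ge 3$ and $q$ odd, and everything else partition the class of finite non-abelian simple groups according to whether coincidences of orders occur.
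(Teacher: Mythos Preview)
Your proposal is correct and matches the paper's own proof, which simply states that the result follows immediately from Proposition~\ref{|G|=|H|} and Theorem~\ref{Artin}. Your elaboration of the case-matching is accurate but already implicit in the one-line argument.
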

\begin{proof}
The result immediately follows from Proposition \ref{|G|=|H|} and Theorem \ref{Artin}.
\end{proof}

As an immediate consequence of Corollary \ref{cong}, we get the following corollary.
\begin{corollary}
Let $G$ be a finite insoluble simple group such that  $\Gamma_{\rm S}(G) \cong \Gamma_{\rm S}(A_n)$, where $n$ is a natural number, $n\geq 5$, $n\neq 8$. 
Then $G\cong A_n$.
\end{corollary}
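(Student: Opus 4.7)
The plan is to derive this corollary directly from Corollary \ref{cong} by showing that, for the relevant values of $n$, the alternating group $A_n$ falls into case (c) of that corollary. Since $G$ is assumed simple, Corollary \ref{cong} applies with $H=A_n$, so it suffices to rule out cases (a) and (b).

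First I would dispose of case (a): this case applies only when $H\in\{A_8,\PSL(3,4)\}$. By hypothesis $n\neq 8$, and for all other $n\geq 5$ we have $|A_n|\neq |A_8|=|\PSL(3,4)|=20160$, so $A_n$ is neither of these groups, and case (a) is vacuous.

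Next I would handle case (b), the $B_m(q)=C_m(q)$ coincidence with $m\geq 3$ and $q$ odd. Here the point is that $A_n$ is never isomorphic to an odd-characteristic group of type $B_m$ or $C_m$ with $m\geq 3$. I would appeal to the well-known list of exceptional isomorphisms among finite simple groups: the only isomorphisms between an alternating group $A_n$ with $n\geq 5$ and a finite simple group of Lie type are $A_5\cong\PSL(2,4)\cong\PSL(2,5)$, $A_6\cong\PSL(2,9)$, and $A_8\cong\PSL(4,2)$. None of these are of type $B_m(q)$ or $C_m(q)$ with $m\geq 3$, so case (b) cannot occur.

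Having excluded (a) and (b), only case (c) of Corollary \ref{cong} is possible, giving $G\cong A_n$. I do not expect any real obstacle here; the only mildly delicate point is citing, or briefly justifying, the list of exceptional isomorphisms ruling out case (b), which is standard from the classification of finite simple groups.
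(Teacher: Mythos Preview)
Your proposal is correct and follows exactly the route the paper indicates: the corollary is stated there as an immediate consequence of Corollary~\ref{cong}, and your argument simply spells out why $A_n$ with $n\geq 5$, $n\neq 8$ lands in case~(c) rather than (a) or (b). The only extra content you supply is the (standard) justification via exceptional isomorphisms that $A_n$ is never a $B_m(q)$ or $C_m(q)$ with $m\geq 3$ and $q$ odd, which the paper leaves implicit.
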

\section{\bf Degree pattern}
Now we  introduce the degree pattern of the solubility graph
and use it to give a conjecture. 
{\em The degree pattern} of the solubility graph of $G$, denoted as ${\rm D}_{s}(G)$, is represented by an $n$-tuple:
$$
{\rm D}_{s}(G)=(d_1, d_2, \cdots,
d_n),
$$
 where $d_1 \geq d_2 \geq \cdots \geq d_n$, and these values correspond to the degrees of vertices in the solubility graph $\Gamma_{\rm S}(G)$. Here, $n$ is determined by $n = |G| - |R(G)|$.
\begin{proposition}\label{degree pattern}
Let $G$ be a finite insoluble group, and let ${\rm D}{s}(G)=(d_1, d_2, \cdots,d_n)$ denote the degree pattern of the solubility graph of $G$.
Then, there exist indices $i$ and $j$ such that $d_{i}\neq d_{j}$.\end{proposition}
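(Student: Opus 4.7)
The plan is a proof by contradiction, since the statement that some $d_i \neq d_j$ is equivalent to saying $\Gamma_{\rm S}(G)$ is not a regular graph. Suppose instead that $d_1 = d_2 = \cdots = d_n = d$; then $|\mathsf{Sol}_G(x)| = s := d + |R(G)| + 1$ is the same constant for every $x \in G \setminus R(G)$.

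The first step is a reduction to the case $R(G) = 1$. By the quotient identity $|\mathsf{Sol}_{G/R(G)}(xR(G))| = |\mathsf{Sol}_G(x)|/|R(G)|$ recorded in the proposition at the end of Section 2, constancy of $|\mathsf{Sol}_G(\cdot)|$ on $G \setminus R(G)$ forces constancy of $|\mathsf{Sol}_{G/R(G)}(\cdot)|$ on $(G/R(G)) \setminus \{1\}$, and $G/R(G)$ has trivial soluble radical. It therefore suffices to derive a contradiction under the additional hypothesis $R(G) = 1$, in which case $|\mathsf{Sol}_G(x)| = s$ for every non-identity $x$.

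Next I would exhibit two non-identity elements of $G$ whose solubilizers must have different sizes. Since $G$ is insoluble, $|G|$ is even (Feit--Thompson), so an involution $t$ exists; also fix an element $x$ of maximum element order $M > 2$. By Lemma \ref{G-Sol}(a) applied to $x$, $|G| - s \geq M + \varphi(M)$, while Lemma \ref{G-Sol}(b) applied to $t$ gives only $|G| - s \geq 6$. The plan is to refine these bounds using the description $\mathsf{Sol}_G(v) = \bigcup\{H \leq G : v \in H,\ H \text{ soluble}\}$ to show $|\mathsf{Sol}_G(t)| \neq |\mathsf{Sol}_G(x)|$: for $x$ of large prime order with cyclic Sylow, this union is essentially controlled by $N_G(\langle x \rangle)$ and tends to be small, whereas for the involution $t$ the union contains the centralizer $C_G(t)$ together with various $2$-local soluble overgroups, making it strictly larger.

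The main obstacle is making this last comparison rigorous in full generality across all insoluble groups with trivial soluble radical, since in principle the two totals could accidentally coincide. If a direct combinatorial contradiction proves elusive, the proposition follows immediately from the theorem cited in the introduction (see \cite{BHOWAL}) that $\Gamma_{\rm S}(G)$ is never a regular graph, which at once yields indices $i, j$ with $d_i \neq d_j$.
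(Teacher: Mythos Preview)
Your reduction to the case $R(G)=1$ via the quotient identity is correct and is exactly how the paper begins. The problem is everything after that. Your proposed comparison between $|\mathsf{Sol}_G(t)|$ for an involution $t$ and $|\mathsf{Sol}_G(x)|$ for an element of maximal order never gets off the ground: Lemma~\ref{G-Sol}(a) and (b) are both \emph{lower} bounds on the single quantity $|G|-s$, so they cannot by themselves separate the two solubilizer sizes, and you yourself concede the heuristic about $2$-local overgroups versus normalizers of cyclic subgroups is not made rigorous. Your fallback of citing \cite{BHOWAL} for non-regularity is technically admissible, but it is precisely the statement this proposition is meant to prove independently (the corollary immediately following it just restates non-regularity), so invoking it empties the proposition of content.

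The paper's argument in the case $R(G)=1$ is a short divisibility count that you are missing entirely. Assuming every non-identity vertex has degree $d$, one sums
\[
d(|G|-1)=\sum_{x\neq 1}\bigl(|\mathsf{Sol}_G(x)|-2\bigr)=\sum_{x\in G}|\mathsf{Sol}_G(x)|-3|G|+2,
\]
and uses that $|G|$ divides $\sum_{x\in G}|\mathsf{Sol}_G(x)|$ to deduce $|G|\mid d+2$. Since $d+2=|\mathsf{Sol}_G(x)|\le |G|$, this forces $\mathsf{Sol}_G(x)=G$ for every $x$, contradicting insolubility. This global counting step is the key idea; trying to compare solubilizers of two specific elements, as you attempt, is both harder and unnecessary.
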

\begin{proof}
First, we prove the assertion for the case $|R(G)|=1$. Assume that for all $1\leq i\leq n$,~$d_{i}=d$. Then
	 \begin{eqnarray*}
	 	d(|G|-1)&=&\sum_{i=1}^{|G|-1} d_i\nonumber\\
	 	&=&\sum_{x\in G\setminus \{1\}} (|\mathsf{Sol}_G(x)|-2)\nonumber\\
	 	&=& \sum_{x\in G\setminus \{1\}} |\mathsf{Sol}_G(x)|-2|G|+2\nonumber\\
	 	&=&\sum_{x\in G} |\mathsf{Sol}_G(x)|-3|G|+2.\nonumber
	 \end{eqnarray*}
Since $|G|~\Big{|}~ \sum_{x\in G} |\mathsf{Sol}_G(x)|$, we have 
$|G|~\big{|}~ 2+d $.This implies that $|G|$ divides $|\mathsf{Sol}_G(x)|$ for each $x\in G$. As a result, we have $G = \mathsf{Sol}_G(x)$ for all $x\in G$. However, this contradicts the assumption that $G$ is an insoluble group. Therefore, the assumption that all $d_i$ are equal leads to a contradiction.\\

Now, let $R=R(G)\neq\{1\}$ and ${\rm D}_{s}(G/R)=(d^{\prime}_1, d^{\prime}_2, \cdots,d^{\prime}_m)$ be the degree pattern of the solubility graph of $G/R$.
 Suppose $\deg_{G/R}(x_{i}R)=d^{\prime}_i,~1\leq i\leq m$. Since $R(G/R)=1$, so there exist the indices 
$1\leq i< j\leq m$ such that $d^{\prime}_{i}\neq d^{\prime}_{j}$. Then, we have  
$$
|\mathsf{Sol}_{G/R}(x_{i}R)|-2 \neq |\mathsf{Sol}_{G/R}(x_{j}R)|-2,
$$ 
which implies $\frac{|\mathsf{Sol}_{G/R}(x_{i}R)|}{|R|}\neq \frac{|\mathsf{Sol}_{G/R}(x_{j}R)|}{|R|}$.  Consequently, we obtain
 $|\mathsf{Sol}_G(x_{i})|\neq |\mathsf{Sol}_G(x_{j})|$, and thus $\deg_{G}(x_i)\neq\deg_{G}(x_j)$. This implies that there exist the indices $1\leq k<l\leq n$ such that $d_{k}=\deg_{G}(x_i)\neq\deg_{G}(x_j)=d_{l}$, and the result holds.
\end{proof}
As an immediate consequence of Proposition \ref{degree pattern} we have the following corollary.
\begin{corollary}
Let $G$ be a finite insoluble group. Then $\Gamma_{\rm S}(G)$ is not regular.	
\end{corollary}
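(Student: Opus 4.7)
The plan is simply to invoke Proposition \ref{degree pattern} and translate it into graph-theoretic language. Recall that a simple graph is called \emph{regular} precisely when every vertex has the same degree, that is, when its degree sequence written in non-increasing order is a constant tuple $(d,d,\ldots,d)$. So to rule out regularity of $\Gamma_{\rm S}(G)$ it suffices to exhibit two vertices of different degrees.

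This is exactly the content of Proposition \ref{degree pattern}: for any finite insoluble group $G$, the degree pattern ${\rm D}_s(G)=(d_1,d_2,\ldots,d_n)$ with $d_1\geq d_2\geq\cdots\geq d_n$ has two entries $d_i\neq d_j$. Consequently, the sequence is not constant, and $\Gamma_{\rm S}(G)$ fails to be regular. No further argument, case analysis, or auxiliary lemma is required; the corollary is stated in the paper precisely as an immediate consequence of the preceding proposition.

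In short, there is no real obstacle here: the entire substance of the statement has already been absorbed into Proposition \ref{degree pattern}, whose proof handled the nontrivial divisibility argument (using $|G|\mid\sum_{x\in G}|\mathsf{Sol}_G(x)|$ in the case $R(G)=1$, and reducing the general case to $G/R(G)$). The corollary therefore admits a one-line proof of the form: ``If $\Gamma_{\rm S}(G)$ were regular, then every $d_i$ in ${\rm D}_s(G)$ would be equal, contradicting Proposition \ref{degree pattern}.''
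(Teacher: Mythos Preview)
Your proposal is correct and matches the paper's approach exactly: the paper states this corollary as an immediate consequence of Proposition~\ref{degree pattern} with no additional argument, which is precisely what you do.
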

We will now determine the structure of a finite group $G$ based on the degrees of vertices in its solubility graph $\Gamma_{\rm S}(G)$. We pose the following question:

\begin{question}
 Let $G$ and $H$ be two insoluble groups such that $\rm{D}{s}(G) = \rm{D}{s}(H)$. Is it true that $|G| = |H|$?
\vspace{.03cm}
\end{question}
In the following proposition, we demonstrate that the question holds true when one of the degrees of vertices in $\rm{D}_{s}(G) = \rm{D}_{s}(H)$ is a specific number.
\begin{proposition}\label{D_s(G)=D_s(H)}
Let $G$ and $H$ be two insoluble groups such that $\rm{D}_{s}(G) = \rm{D}_{s}(H)$. Furthermore, let one of the degrees of vertices be $p-1$, where $p$ is a prime, and $n = |G| - |R(G)|$. Then $|G| = |H|$.
\end{proposition}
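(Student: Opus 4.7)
The plan is short, because the main work has already been done in Proposition~\ref{deg p-1}. From the hypothesis ${\rm D}_{s}(G)={\rm D}_{s}(H)$ I would read off two facts. First, the two degree-pattern tuples have the same length, and this length is precisely the number of vertices of each solubility graph; hence
\[
|G|-|R(G)| \;=\; n \;=\; |H|-|R(H)|.
\]
Second, the multiset of degrees appearing in $\Gamma_{\rm S}(G)$ coincides with that appearing in $\Gamma_{\rm S}(H)$, so the entry $p-1$ that (by hypothesis) appears in ${\rm D}_{s}(G)$ also appears in ${\rm D}_{s}(H)$; each of the two graphs therefore contains a vertex of degree $p-1$.

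Next I would apply Proposition~\ref{deg p-1} separately to $G$ and to $H$. Since each of these groups has a vertex whose degree equals $p-1$ for a prime $p$, that proposition forces $R(G)=1$ and $R(H)=1$. Substituting both equalities into the vertex-count identity above collapses it to $|G|-1=|H|-1$, which yields $|G|=|H|$, as required.

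There is no genuine obstacle to the argument; the only point worth emphasising is that the ``some vertex has degree $p-1$'' hypothesis is a statement purely about the degree-pattern tuple, and so is inherited by $H$ as soon as ${\rm D}_{s}(G)={\rm D}_{s}(H)$ is assumed. The proposition is therefore a direct corollary of Proposition~\ref{deg p-1}, applied symmetrically to the two groups.
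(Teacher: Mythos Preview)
Your argument is correct and mirrors the paper's own proof: both extract a vertex of degree $p-1$ in each group, apply Proposition~\ref{deg p-1} to force $R(G)=R(H)=1$, and then read off $|G|=|H|$ from the equality of vertex counts. You have simply made explicit the step the paper leaves as ``the rest of proof is obvious.''
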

\begin{proof}
By assumption there are two elements $x\in G$ and $y\in H$ such
	that ${\rm deg}(x)={\rm deg}(y)=p-1$. Then using Theorem \ref{deg p-1} we deduce that $R(G)=R(H)=1$. The rest of proof is obvious.
\end{proof}
Finally, we utilize Proposition \ref{D_s(G)=D_s(H)} and Theorem \ref{Artin} to deduce the following corollary:

\begin{corollary}
Let $G$ be a finite insoluble simple group, excluding $A_{8}$, $\PSL(3,4)$, $B_{n}(q)$, and $C_n(q)$, where $q$ is odd and $n\geq 3$. Additionally, let $H$ be an insoluble group such that $\rm {D}_{s}(G) = \rm{D}_{s}(H)$. If one of the degrees of vertices is $p-1$, where $p$ is a prime, then $H\cong G$.
\end{corollary}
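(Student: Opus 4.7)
The plan is to combine Proposition \ref{D_s(G)=D_s(H)} with Corollary \ref{cong}. The degree-pattern hypothesis, together with the presence of a vertex of degree $p-1$, should upgrade to the stronger conclusion $|G|=|H|$ (with trivial soluble radicals), after which simplicity of $G$ and the Artin-style Theorem \ref{Artin} should pin down $H$ up to isomorphism.

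First I would invoke Proposition \ref{D_s(G)=D_s(H)}: since $\mathrm{D}_s(G)=\mathrm{D}_s(H)$ contains a value $p-1$ with $p$ prime, the proof of that proposition (which in turn uses Proposition \ref{deg p-1}) yields both $R(G)=R(H)=1$ and $|G|=|H|$. Next I would argue that $H$ itself must be simple. Since $R(H)=1$ and $H$ is insoluble, the socle of $H$ is a direct product $\mathrm{soc}(H)=S_1\times\cdots\times S_k$ of non-abelian finite simple groups, and $H$ embeds into $\mathrm{Aut}(\mathrm{soc}(H))$, so $|H|$ divides $|\mathrm{soc}(H)|\cdot|\mathrm{Out}(\mathrm{soc}(H))|$. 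The constraint $|H|=|G|$, where $|G|$ is the order of a finite simple group lying outside the Artin-exceptional list, is highly restrictive; appealing to the classification of finite simple groups one rules out all genuine product-type ($k\geq 2$) and proper almost-simple ($k=1$, $H\neq\mathrm{soc}(H)$) options, forcing $H=\mathrm{soc}(H)=S_1$.

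Finally, with $G$ and $H$ both simple of the same order and $G$ not among $A_8$, $\PSL(3,4)$, $B_n(q)$, $C_n(q)$ ($q$ odd, $n\geq 3$), Corollary \ref{cong}(c) immediately gives $H\cong G$, completing the argument.

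The main obstacle is the middle step, that of establishing simplicity of $H$: the hypothesis only delivers $R(H)=1$ and $|H|=|G|$, and there is no purely elementary reason that a group with trivial soluble radical whose order equals a simple-group order is itself simple. This step genuinely leans on the classification of finite simple groups, and the exclusion of $A_8$, $\PSL(3,4)$, $B_n(q)$, and $C_n(q)$ is precisely what is needed to make the subsequent appeal to Artin's theorem (Theorem \ref{Artin}(c)) conclusive.
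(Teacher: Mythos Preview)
Your overall strategy---extract $R(G)=R(H)=1$ and $|G|=|H|$ from Proposition~\ref{D_s(G)=D_s(H)}, then finish with Artin's theorem---is exactly what the paper intends: it simply says the corollary follows from Proposition~\ref{D_s(G)=D_s(H)} and Theorem~\ref{Artin}, with no further argument. One small correction: in your final step you should invoke Theorem~\ref{Artin}(c) directly rather than Corollary~\ref{cong}(c), since the latter is stated under the hypothesis $\Gamma_{\rm S}(G)\cong\Gamma_{\rm S}(H)$, which you do not have; what you actually have after the first step is $|G|=|H|$, which is precisely the input to Theorem~\ref{Artin}.

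You have, however, correctly spotted something the paper glosses over: Theorem~\ref{Artin} applies only when \emph{both} $G$ and $H$ are simple, whereas the corollary merely assumes $H$ is insoluble. The paper offers no justification for the simplicity of $H$; your middle paragraph is an attempt to supply one. But as you yourself note, the argument there is only a sketch: the claim that every insoluble group $H$ with $R(H)=1$ whose order coincides with that of a (non-exceptional) finite simple group must itself be simple is not a standard consequence of the classification, and ``appealing to the classification one rules out all product-type and proper almost-simple options'' is an assertion, not a proof. Ruling out an almost-simple $H$ with $\mathrm{soc}(H)=S$ and $|H|=|G|$ would require, for each candidate $S$, arithmetic control on $|\mathrm{Out}(S)|$ relative to simple-group orders, and the product case is no easier. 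So either the corollary is tacitly meant to assume $H$ simple (in which case your proof and the paper's coincide and are complete), or both your argument and the paper's share the same unfilled gap at this point.
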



{\footnotesize}{\bf M. Poozesh}, {Department of Mathematics}, {Faculty of Basic Sciences}, {Sahand University of Technology}, {Tabriz, Iran}\\
{\tt Email: mi\_poozesh@sut.ac.ir}\\

{\footnotesize}{\bf Y. Zamani}, {Department of Mathematics}, {Faculty of Basic Sciences}, {Sahand University of Technology}, {Tabriz, Iran}\\
{\tt Email: zamani@sut.ac.ir}
\end{document}